\documentclass[reqno,11pt]{amsart}
\usepackage{graphicx,amsfonts,amssymb,amsmath,amsthm,amscd}
\usepackage[all]{xy}
\usepackage{color}
\usepackage{hyperref}
\hypersetup{colorlinks,linkcolor=magenta,citecolor=blue,filecolor=blue,urlcolor=blue}
\newcounter{statement}

\newtheorem{theorem}    [statement]{Theorem}
\newtheorem{lemma}      [statement]{Lemma}

\newtheorem{prop}[statement]{Proposition}

\newcounter{intro}

\newtheorem*{theorem*}           {Theorem}
\newtheorem*{conjecture*}           {Conjecture}

\theoremstyle{definition}
\newtheorem{definition} [statement]{Definition}

\newtheorem*{definition*}           {Definition}

\theoremstyle{remark}
\newtheorem{remark}[statement]{Remark}

\newtheorem*{claim*}               {Claim}

\setlength{\topmargin}{0.0in}
\setlength{\textheight}{9.0in}
 \setlength{\evensidemargin}{0.0in}
\setlength{\oddsidemargin}{0.0in}
\setlength{\textwidth}{6.2in}



\definecolor{prpl}{rgb}{0.7, 0.0, 0.7}
\newenvironment{JT}{\noindent \color{prpl}{\bf JT:} \footnotesize}{} \newenvironment{BB}{\noindent \color{blue}{\bf BB:} \footnotesize}{}



\def\chone{\operatorname{c_1}}

\def\rk{\operatorname{rk}}

\def\Pic{\operatorname{Pic}}

\def\ev{\operatorname{ev}}
\def\Cliff{\operatorname{Cliff}}


\def\Z{\mathbb{Z}}

\def\cM{\mathcal{M}}

\def\mm{\overline{\mathcal{M}}}

\def\cM{\mathcal{M}}

\def\cZ{\mathcal{Z}}
\def\cU{\mathcal{U}}

\def\pp{{\textbf P}}


\renewcommand{\O}{\mathcal{O}}


\renewcommand{\phi}{\varphi}


\def\into{\rightarrow}


\title{The Mercat Conjecture for stable rank $2$ vector bundles on generic curves}
\author[B. Bakker]{Benjamin Bakker}
\address{Humboldt-Universit\"at zu Berlin, Institut f\"ur Mathematik,  Unter den Linden 6
\hfill \newline\texttt{}
\indent 10099 Berlin, Germany} \email{{\tt benjamin.bakker@math.hu-berlin.de}}

\author[G. Farkas]{Gavril Farkas}
\address{Humboldt-Universit\"at zu Berlin, Institut f\"ur Mathematik,  Unter den Linden 6
\hfill \newline\texttt{}
 \indent 10099 Berlin, Germany} \email{{\tt farkas@math.hu-berlin.de}}


\begin{document}

\begin{abstract}
We prove the Mercat Conjecture for rank $2$ vector bundles on generic curves of every genus. For odd genus, we identify the effective divisor in $\cM_g$ where the Mercat Conjecture fails and compute its slope.
\end{abstract}

\maketitle

The Clifford index of a smooth curve $C$, defined as the non-negative quantity
$$\mbox{Cliff}(C):=\mbox{min}\Bigl\{\mbox{deg}(L)-2r(L): L\in \mbox{Pic}(C),\ h^0(C,L)\geq 2, \ \mbox{deg}(L)\leq g-1\Bigr\}$$
stratifies the moduli space $\cM_g$ of curves of genus $g$, with the smallest stratum being that consisting of hyperelliptic curves, which have Clifford index zero. While the definition is obviously inspired by the classical Clifford theorem, the notion came to real prominence in the works of Green, Lazarsfeld \cite{G}, \cite{GL}, Voisin \cite{V} and many others in the context of syzygies of canonical curves. For a general curve $C$ of genus $g$, one has $\mbox{Cliff}(C)=\lfloor \frac{g-1}{2}\rfloor$; the locus of curves $[C]\in \cM_g$ with $\mbox{Cliff}(C)<\lfloor \frac{g-1}{2}\rfloor$  is a subvariety of codimension $1$ (respectively $2$) when $g$ is odd (respectively even) much studied by Harris and Mumford \cite{HM}.

\vskip 3pt

It has been a long standing problem to find an adequate definition of the Clifford index for higher rank vector bundles on curves. A higher rank Clifford index should not only capture the behavior of the generic curve from the point of view of special higher rank vector bundles, but also provide a geometrically meaningful stratification of $\cM_g$. An interesting  definition has been put forward by Lange and Newstead \cite{LN1}. For a semistable vector bundle $E$ of rank $2$ and slope $\mu(E)$  on a curve $C$ of genus $g\geq 4$, one defines its Clifford index as
$$\mathrm{Cliff}(E):=\mu(E)-h^0(C, E)+2\ge 0.$$ The \emph{rank $2$ Clifford index} of $C$ is then defined as the quantity
$$\mathrm{Cliff}_2(C):=\mbox{min}\Bigl\{\mathrm{Cliff}(E): E\in \cU_C(2, d), \ \ d\leq 2g-2, \ \ h^0(C, E)\geq 4\Bigr\}.$$

Observe that if $L$ is a line bundle on $C$ with $\mbox{deg}(L)-2h^0(C,L)+2=\mbox{Cliff}(C)$ (that is, $L$ computes the Clifford index of $C$), then $\mbox{Cliff}(L\oplus L)=\mbox{Cliff}(C)$. In particular, the inequality $\mbox{Cliff}_2(C)\leq \mbox{Cliff}(C)$ holds for every curve $C$ of genus $g$. The main achievement of this paper is the proof of the following result:

\begin{theorem}\label{mercat2}
For a general curve $C$ of genus $g$ the following equality holds
$$\mathrm{Cliff}_2(C)=\mathrm{Cliff}(C)=\Bigl\lfloor \frac{g-1}{2}\Bigr\rfloor.$$
Equivalently, if $E$ is a semistable rank $2$ vector bundle on $C$ contributing to $\mathrm{Cliff}_2(C)$, then
$$h^0(C,E)\leq \mu(E)+2-\mathrm{Cliff}(C).$$
\end{theorem}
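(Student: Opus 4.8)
Since the excerpt already establishes $\mathrm{Cliff}_2(C) \le \mathrm{Cliff}(C)$ for every curve, and $\mathrm{Cliff}(C) = \lfloor \frac{g-1}{2}\rfloor$ for a general curve by classical Brill--Noether theory, the content of the theorem is the reverse inequality $\mathrm{Cliff}_2(C) \ge \lfloor \frac{g-1}{2}\rfloor$ for general $C$. Equivalently, I want to rule out a semistable rank $2$ bundle $E$ on a general $C$ with $h^0(C,E) \ge 4$ and $\mathrm{Cliff}(E) \le \lfloor \frac{g-1}{2}\rfloor - 1$. The first step is a semicontinuity reduction: the locus of $[C] \in \cM_g$ carrying such a bundle is closed, since the relevant relative moduli of semistable bundles is proper over $\cM_g$ and $h^0$ is upper semicontinuous, so that $\mathrm{Cliff}(E)$ is lower semicontinuous in flat families of fixed slope; the only care needed is that a flat limit of semistable bundles may be destabilized, in which case its Jordan--Hölder factors are line bundles governed by the classical Clifford index, which is already under control. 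Hence it suffices to produce a single smooth curve $C_0$ of genus $g$ with both $\mathrm{Cliff}(C_0) = \lfloor \frac{g-1}{2}\rfloor$ and $\mathrm{Cliff}_2(C_0) \ge \lfloor \frac{g-1}{2}\rfloor$.

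\textbf{Choice of a special curve.} Given the genesis of the Clifford index in the syzygy theory of canonical curves, I would take $C_0$ to be a general member of a primitive polarization on a K3 surface $S$. For such curves Lazarsfeld's theorem guarantees $\mathrm{Cliff}(C_0) = \lfloor \frac{g-1}{2}\rfloor$, so the upper bound is in force and the Brill--Noether theory of line bundles on $C_0$ is as generic as possible. The advantage is that any stable rank $2$ bundle $E$ on $C_0$ with many sections can be confronted with the ambient geometry via a Lazarsfeld--Mukai type construction: evaluation of global sections produces a sheaf on $S$ whose numerical invariants are constrained by the Hodge lattice of $S$ and by the Bogomolov inequality. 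A more combinatorial alternative is to degenerate $C_0$ to a chain of elliptic curves and run a limit-linear-series analysis for rank $2$, but I expect the K3 route to give cleaner control of the extremal bundles.

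\textbf{The sub-line-bundle estimate.} For $E$ stable (the strictly semistable case reduces to line bundles and the classical Clifford theorem), let $L \subset E$ be a sub-line bundle of maximal degree with quotient $M = E/L$, so stability gives $\deg L \le \mu(E) \le \deg M$. From $0 \to L \to E \to M \to 0$ one has $h^0(E) \le h^0(L) + h^0(M)$, and applying the classical Clifford bound to $L$ and $M$ on a curve with $\mathrm{Cliff}(C_0) = \lfloor \frac{g-1}{2}\rfloor$ yields $\mathrm{Cliff}(E) \ge \lfloor \frac{g-1}{2}\rfloor$ in every range except a short list of boundary configurations where $L$ or $M$ is trivial, canonical, or computes the Clifford index with equality. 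Carrying out this bookkeeping separately for odd and even $g$ should dispose of all but the genuinely extremal bundles, consistent with the abstract's observation that the failure locus in odd genus is a single divisor.

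\textbf{Main obstacle.} The crux is precisely the stable bundles whose global sections do \emph{not} factor through the maximal sub-line bundle, so that $h^0(E) \le h^0(L) + h^0(M)$ is far from sharp and the elementary estimate is lost. To control these I would reinterpret the pair $(E, H^0(C_0,E))$ as a coherent system and either bound $h^0(E)$ directly by lifting the evaluation map to $S$ and playing the stability of the resulting Lazarsfeld--Mukai sheaf against the Bogomolov inequality, or show that the expected dimension of the relevant Brill--Noether locus of rank $2$ coherent systems is negative for general $C_0$. Making the lift genuinely bound $h^0$, rather than merely reproduce the sub-bundle estimate, is the hard point, and the parity of $g$ enters here through the discriminant of the lattice computation.
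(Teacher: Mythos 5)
Your first two steps coincide with the paper's strategy: the semicontinuity reduction to a single smooth curve, and the choice of that curve as a section of a $K3$ surface $X$ with $\Pic(X)=\Z\cdot H$, where Lazarsfeld's theorem gives $\Cliff(C_0)=\lfloor\frac{g-1}{2}\rfloor$. Your treatment of the strictly semistable case and of bundles with a subpencil via the classical Clifford inequality is also how the paper (following Mercat) disposes of them. The problem is what remains. Bundles without subpencils are not a ``short list of boundary configurations'': they are the entire substance of the theorem (every known counterexample on special $K3$ surfaces is of this kind, with $h^0(C,\det E)\geq 2h^0(C,E)-3$), and you explicitly leave this case open, proposing either to ``play the stability of the resulting Lazarsfeld--Mukai sheaf against the Bogomolov inequality'' or to run a coherent-systems dimension count. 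Neither suggestion works as stated: a dimension count is unavailable because $K3$ sections are far from general in higher-rank Brill--Noether theory (the paper notes that $(M_3)$ and $(M_4)$ actually \emph{fail} on them), and the Lazarsfeld--Mukai sheaf $F_E$ is \emph{not} stable in the situation to be excluded, so there is no stability to play against Bogomolov. This is a genuine gap, not deferred bookkeeping.

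The paper's mechanism runs in the opposite direction: the \emph{instability} of $F_E$ is the engine of the proof. With $n=h^0(C,E)$, $m=h^1(C,E)$ and $F_E$ the kernel of the evaluation $\mathcal{O}_X\otimes H^0(C,E)\to E$, one computes
$$v(F_E)^2=-2nm+4(2g-2)=-2(n-4)(m-4)+16\Bigl(\Cliff(E)-\frac{g-1}{2}\Bigr),$$
so $\Cliff(E)<\bigl\lfloor\frac{g-1}{2}\bigr\rfloor$ forces $v(F_E)^2\leq-8$. Since stable sheaves on a $K3$ are simple and simple sheaves satisfy $v^2\geq-2$, the bundle $F_E$ is not stable, and a divisibility check on $v(F_E)$ rules out strict semistability. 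Because $F_E\subset\mathcal{O}_X^{\oplus n}$ has no global sections, every Harder--Narasimhan factor has strictly negative slope, hence first Chern class $-a_iH$ with $a_i\geq 1$; as these sum to $c_1(F_E)=-2H$, the filtration has exactly two steps $0\to G\to F_E\to M\to 0$ with $G$ and $M$ \emph{stable} and $c_1(G)=c_1(M)=-H$. Now stability is applied to the pieces: writing $v(G)=(k,-H,\ell)$, the inequalities $v(G)^2\geq-2$ and $v(M)^2\geq-2$ give $g\geq k\ell$ and $g\geq(n-k)(m-\ell)$. Two auxiliary lemmas (torsion-freeness of $\mathcal{O}_X^{\oplus n}/G$, and a rank bound for torsion-free sheaves surjecting onto $E$) together with elementary exclusions guarantee $k,\ell,n-k,m-\ell\geq 2$, and the observation that $x+y\leq\lceil\frac{g-1}{2}\rceil+2$ whenever $xy\leq g$ and $x,y\geq 2$ then yields $n+m\leq 2\lceil\frac{g-1}{2}\rceil+4$, i.e. $\Cliff(E)\geq\lfloor\frac{g-1}{2}\rfloor$, a contradiction. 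This pincer --- $v^2<-2$ destroys semistability of $F_E$, while $v^2\geq-2$ constrains its Harder--Narasimhan factors, with the Picard rank one hypothesis pinning down their first Chern classes --- is precisely the idea missing from your proposal; the parity of $g$ enters only through the divisibility check, not through a discriminant computation.
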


Mercat \cite{Me} conjectured the equality $\mbox{Cliff}_2(C)=\mbox{Cliff}(C)$ for every smooth curve $[C]\in \cM_g$. Counterexamples to this expectation have been found using Noether-Lefschetz special $K3$ surfaces in \cite{FO1}, \cite{FO2}, \cite{LN2} and \cite{LN3}. However these examples are special in moduli and Theorem \ref{mercat2} proves Mercat's Conjecture for generic curves of every genus.

\vskip 3pt

In view of Theorem \ref{mercat2}, it is interesting to describe the cycle in $\cM_g$ consisting of curves $C$ such that $\mathrm{Cliff}_2(C)<\mathrm{Cliff}(C)$. Recall that for odd genus, the locus of curves having sub-maximal Clifford index can be identified with the \emph{Hurwitz divisor}
$$\mathfrak{Hur}_g:=\Bigl\{[C]\in \cM_g: C \mbox{ carries a pencil } \mathfrak g^1_{\frac{g+1}{2}}\Bigr\}.$$
The divisor $\mathfrak{Hur}_g$ was instrumental in the proof of Harris-Mumford \cite{HM} that $\cM_g$ is of general type for large $g$. It has already been pointed out in \cite{FO1}, \cite{FO2}, \cite{LN2} that there exist curves $C$ with maximal Clifford index $\mathrm{Cliff}(C)=\bigl \lfloor \frac{g-1}{2} \bigr \rfloor$, but with $\mbox{Cliff}_2(C)<\mbox{Cliff}(C)$. In particular, the stratification of $\cM_g$ given by the rank $2$ Clifford index is \emph{different} from the classical stratification given by gonality.  We show that, for odd genus, the locus of curves with a sub-maximal second Clifford index is a \emph{Koszul divisor} on $\cM_g$, different from the Hurwitz divisor but having the same slope!

For a globally generated line bundle $L$ on a curve $C$, the Koszul cohomology group
$$K_{1,1}(C,L):=\mbox{Ker}\Bigl\{\mbox{Sym}^2 H^0(C,L)\rightarrow H^0(C, L^{\otimes 2})\Bigr\}$$ is identified with the space of quadrics containing the image curve $\phi_L:C\rightarrow \pp\bigl(H^0(C,L)^{\vee}\bigr)$.

\begin{theorem}\label{divisor}
For each odd genus $g\geq 11$, the locus of curves $C$ satisfying $\mathrm{Cliff}_2(C)<\mathrm{Cliff}(C)$ has a divisorial component which is identified with the Koszul divisor
$$\mathfrak{Kosz}_g:=\Bigl\{[C]\in \cM_g:\exists L\in W^4_{g+2}(C)  \  such \  that  \ \ K_{1,1}(C,L)\neq 0\Bigr\}.$$
The locus $\mathfrak{Kosz}_g$ is an effective divisor on $\cM_g$, different from the Hurwitz divisor, of slope
$$s(\overline{\mathfrak{Kosz}}_g)=6+\frac{12}{g+1}.$$
\end{theorem}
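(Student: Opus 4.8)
The goal is to prove Theorem~\ref{divisor}, which has two parts: first, identifying $\mathfrak{Kosz}_g$ as a divisorial component of the locus where $\mathrm{Cliff}_2(C)<\mathrm{Cliff}(C)$; and second, computing its slope. Let me sketch both.

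\begin{proof}[Proof sketch]
The plan has two conceptually distinct stages: establishing the \emph{set-theoretic} identification, and computing the class of the \emph{divisor}.

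For the first stage, the idea is to connect the failure of the Mercat inequality to the existence of a quadric through a projectively embedded curve. On a curve of odd genus $g$, a rank $2$ bundle $E$ violating $\mathrm{Cliff}_2(C)=\mathrm{Cliff}(C)$ must have $\mathrm{Cliff}(E)<\lfloor\frac{g-1}{2}\rfloor=\frac{g-1}{2}$, which for the extremal degree $d=2g-2$ forces $h^0(C,E)\geq \frac{g+5}{2}$ or so; more precisely one expects such $E$ to arise as an extension or via the Lange--Newstead / Mukai--Lazarsfeld construction from a line bundle $L\in W^4_{g+2}(C)$. The key point is that a non-zero Koszul class in $K_{1,1}(C,L)$—equivalently a quadric $Q$ containing $\phi_L(C)\subset \P^4$—produces, via the kernel bundle of $Q$ or via the associated rank-$2$ Lazarsfeld--Mukai bundle, exactly such a bundle $E$ with too many sections. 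Conversely, I would show that any bundle $E$ computing a sub-maximal $\mathrm{Cliff}_2(C)$ gives rise to a pencil of quadrics and hence a nonzero $K_{1,1}$. The first step, then, is to set up this correspondence carefully: given $L\in W^4_{g+2}(C)$ with $K_{1,1}(C,L)\neq 0$, build the rank $2$ bundle and verify it violates the Mercat bound, and conversely. The expectation that $W^4_{g+2}(C)$ is the relevant Brill--Noether locus comes from matching numerical invariants: the Brill--Noether number $\rho(g,4,g+2)=g-5\cdot(4+1)+5\cdot(g+2)\ldots$ should vanish or be small precisely for the generic curve in a divisor, making $W^4_{g+2}(C)$ finite and non-empty along a codimension-one locus.

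For the second stage, the slope computation, I would realize $\mathfrak{Kosz}_g$ as the degeneracy locus of a morphism of vector bundles over a suitable parameter space, following the Koszul-divisor technology of Farkas. Concretely, over the Hurwitz-type or Brill--Noether space parametrizing pairs $(C,L)$ with $L\in W^4_{g+2}(C)$, one forms two vector bundles $\mathcal{A}$ and $\mathcal{B}$ globalizing $\Sym^2 H^0(C,L)$ and $H^0(C,L^{\otimes 2})$ respectively, together with the multiplication map $\mathcal{A}\to\mathcal{B}$; the divisor $\mathfrak{Kosz}_g$ is where this map fails to be injective (i.e. where $K_{1,1}\neq 0$). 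Since $\rho=0$ here, the Brill--Noether space maps generically finitely onto $\cM_g$, and I would pull back, compute $c_1(\mathcal{A})$ and $c_1(\mathcal{B})$ via Grothendieck--Riemann--Roch on the universal curve, and push forward to obtain the class $[\overline{\mathfrak{Kosz}}_g]$ in terms of the standard generators $\lambda,\delta_0,\ldots$ on $\overline{\cM}_g$. Extracting the coefficients of $\lambda$ and $\delta_0$ then yields the slope $s=a/b$ via $s=(\text{coeff of }\lambda)/(\text{coeff of }\delta_0)$, which the theorem asserts equals $6+\frac{12}{g+1}$.

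The main obstacle, I expect, lies in the \emph{transversality and multiplicity} issues underlying the slope computation rather than in the set-theoretic identification. Two difficulties stand out: first, one must verify that $\mathfrak{Kosz}_g$ is genuinely a divisor (has the expected codimension one) and not something smaller—this requires showing the degeneracy locus is non-empty and proper, which typically rests on a delicate dimension count or an explicit example (e.g. via a limit linear series or a $K3$ degeneration) confirming that a generic such $(C,L)$ carries no quadric. Second, and most seriously, computing the boundary coefficients—especially $\delta_0$—demands a careful analysis of how the Koszul/multiplication map degenerates over the boundary of $\overline{\cM}_g$, accounting for the multiplicities with which the parameter space maps to $\cM_g$ and any base-locus corrections. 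Matching the resulting slope exactly with that of the Hurwitz divisor (and verifying the two divisors are distinct, presumably by exhibiting a curve in one but not the other) is the sharp endpoint of the argument.
\end{proof}
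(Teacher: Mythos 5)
Your first stage---the dictionary between nonzero classes in $K_{1,1}(C,L)$ for $L\in W^4_{g+2}(C)$ and globally generated rank $2$ bundles $E$ with $\det(E)=L$ and $h^0(C,E)\geq 4$---is exactly how the paper identifies $\mathfrak{Kosz}_g$ inside the locus $\Cliff_2(C)<\Cliff(C)$, so that part is sound. The genuine gap is in your second stage, and it begins with a numerical error: $\rho(g,4,g+2)=g-5\bigl(g-(g+2)+4\bigr)=g-10$, which is strictly \emph{positive} for $g\geq 11$, not zero. Hence $W^4_{g+2}(C)$ is nonempty of dimension $g-10$ for \emph{every} curve of genus $g\geq 11$, the space of pairs $(C,L)$ is not generically finite over $\cM_g$, and the locus where $\Sym^2 H^0(C,L)\to H^0(C,L^{\otimes 2})$ fails to be injective is not a divisor in that parameter space: since $\dim \Sym^2 H^0(C,L)=15$ while $h^0(C,L^{\otimes 2})=g+5\geq 16$, the degeneracy locus has expected codimension $g-9$ upstairs and becomes a divisor in $\cM_g$ only after pushing forward along the $(g-10)$-dimensional Brill--Noether fibers. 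So the computation you propose---compare first Chern classes of the two bundles and push forward---simply does not apply; one would need a Porteous-type class in codimension $g-9$ on a compactified universal Brill--Noether variety, together with control of all boundary degenerations, a calculation of an entirely different order which neither you nor the paper carries out.

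The paper's route to the slope, which your proposal misses, is the real point of its Theorem \ref{k3mercat}: no intersection-theoretic computation is needed at all. Take $(X,H)$ a $K3$ surface with $\Pic(X)=\Z\cdot H$ and $H^2=2g-2$, and let $j:\P^1\to\mm_g$ be the curve given by a Lefschetz pencil in $|H|$; then $j^*(\lambda)=g+1$, $j^*(\delta_0)=6(g+3)$, and $j^*(\delta_i)=0$ for $i\geq 1$. If some member $C$ of the pencil (smooth or irreducible one-nodal) lay in $\overline{\mathfrak{Kosz}}_g$, your stage-one dictionary would produce a rank $2$ bundle $E$ on $C$ with $\Cliff(E)=\frac{g-2}{2}<\bigl\lfloor\frac{g-1}{2}\bigr\rfloor=\Cliff(C)$; the paper checks that $E$ is stable (using Brill--Noether generality of $C$, \cite{La}), contradicting Theorem \ref{k3mercat}. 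Hence $j(\P^1)\cap\overline{\mathfrak{Kosz}}_g=\emptyset$, and writing $[\overline{\mathfrak{Kosz}}_g]=a\lambda-\sum_i b_i\delta_i$, disjointness forces $a(g+1)-6(g+3)b_0=0$, i.e. $s(\overline{\mathfrak{Kosz}}_g)=6+\frac{12}{g+1}$. Two further points your sketch defers are genuinely needed and are supplied in the paper: divisoriality of $\mathfrak{Kosz}_g$ (Proposition \ref{cod1}, via an inductive construction of curves of degree $2a+3$ on a smooth quadric $Q\subset\P^4$ satisfying $H^1(C,N_{C/Q})=0$ and $h^1(C,T_{Q|C})=1$, with base case $g=11$ from \cite{FO2}), and distinctness from $\mathfrak{Hur}_g$ (an example from \cite{FO2} of a curve in $\mathfrak{Kosz}_g$ with maximal Clifford index). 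As written, your proposal has no viable route to the slope.
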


Recall that if $D$ is an effective divisor on $\mm_g$ not containing any boundary divisors in its support and with $[D]=a\lambda-\sum_{i=0}^{\lfloor \frac{g}{2}\rfloor} b_i\delta_i\in \mbox{Pic}(\mm_g)$, its slope, defined by $s(D):=\frac{a}{b_0}$, is a quantity measuring the positivity of $D$. Harris and Mumford \cite{HM} famously showed that $s(\overline{\mathfrak{Hur}}_g)=6+\frac{12}{g+1}$, which implies that $\mm_g$ is of general type for odd genus $g>23$ . Our Theorem \ref{divisor} shows  that  the divisors $\overline{\mathfrak{Kosz}}_g$ and $\overline{\mathfrak{Hur}}_g$ have the same slope.

\vskip 4pt

A parameter count indicates that $\mathfrak{Kosz}_g$ is indeed expected to be a divisor on $\cM_g$. For a general curve $C$ of genus $g\geq 11$ and a base point free linear series $L\in W^4_{g+2}(C)$, since $H^1(C,L^{\otimes 2})=0$  we have that
$$h^0(C,L^{\otimes 2})-\mbox{dim } \mbox{Sym}^2 H^0(C,L)+1=\mbox{dim } W^4_{g+2}(C)+1=g-9.$$
Furthermore, the condition $K_{1,1}(C,L)\neq 0$ is equivalent to the existence of a globally generated rank $2$ vector bundle $E$ on $C$ with $\mbox{det}(E)=L$ and $h^0(C,E)=4$. It is straightforward to show that for a general point $[C]\in \mathfrak{Kosz}_g$, the corresponding vector bundle $E$ is stable. Clearly
$\mbox{Cliff}(E)=\frac{g-2}{2}<\bigl \lfloor \frac{g-1}{2}\bigr \rfloor$, that is, $\mbox{Cliff}_2(C)<\bigl \lfloor \frac{g-1}{2}\bigr \rfloor $ for all points $[C]\in \mathfrak{Kosz}_g$.
Syzygy divisors on $\overline{\cM}_g$ characterized by the non-vanishing of a Koszul cohomology groups $K_{p,1}(C,L)$ for a line bundle $L$ with Brill-Noether number equal to zero have been studied systematically in \cite{Fa} and shown to lead to divisors of slope less that $6+\frac{12}{g+1}$. In light of those results, the fact that $\overline{\mathfrak{Kosz}}_g$ has the same slope as that of the Brill-Noether divisors (despite being distinct from them) comes as a surprise.

\vskip 3pt
The proof of both Theorems \ref{mercat2} and \ref{divisor} relies in an essential way on a specialization to generic polarized $K3$ surfaces. Precisely we establish the following result:

\begin{theorem} \label{k3mercat}  Let $(X,H)$ be a smooth polarized $K3$ surface of degree $H^2=2g-2$ such that $\mathrm{Pic}(X)=\mathbb Z\cdot H$. Then for any (possibly nodal) curve $C\in |H|$, we have that
$$\Cliff_2(C)=\Cliff(C).$$
\end{theorem}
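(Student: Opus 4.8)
The plan is to establish the nontrivial inequality $\Cliff_2(C)\ge\Cliff(C)$; the reverse is automatic. Since $\Pic(X)=\Z\cdot H$, Green--Lazarsfeld \cite{GL} show that the Clifford index is constant along $|H|$ and attains its maximal value $\Cliff(C)=\lfloor\frac{g-1}{2}\rfloor$, and that every line bundle $A$ on $C$ with two sections obeys $\Cliff(A)\ge\lfloor\frac{g-1}{2}\rfloor$. Arguing by contradiction, suppose some semistable rank $2$ bundle $E$ with $h^0(C,E)\ge 4$ and $\deg E\le 2g-2$ satisfies $\Cliff(E)<\lfloor\frac{g-1}{2}\rfloor$. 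Replacing $E$ by the subsheaf generated by its sections, I may assume $E$ is globally generated: if the sections span only a rank $1$ subsheaf $A$, then $\deg A\le\mu(E)$ by semistability forces $\Cliff(A)<\Cliff(E)$, already contradicting the line bundle bound. A Riemann--Roch computation rewrites the Clifford index as $\Cliff(E)=g+1-\frac12\big(h^0(C,E)+h^1(C,E)\big)$; the hypothesis $\deg E\le 2g-2$ gives $h^0(C,E)\le h^1(C,E)$, and the violation together with $h^0(C,E)\ge 4$ then yields $h^0(C,E)\,h^1(C,E)\ge 4g$.

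The engine is the Lazarsfeld--Mukai construction on $X$. Global generation gives a surjection $H^0(C,E)\otimes\mathcal{O}_X\twoheadrightarrow\iota_*E$ for $\iota\colon C\hookrightarrow X$; let $\mathcal{F}$ be its kernel and $\mathcal{E}:=\mathcal{F}^\vee$. Then $\mathcal{F}$ is locally free of rank $h^0(C,E)$ with $H^0(X,\mathcal{F})=0$ and $\mathcal{F}\subset\mathcal{O}_X^{\oplus h^0(C,E)}$, and using $\omega_X=\mathcal{O}_X$ together with the adjunction $\omega_C=\mathcal{O}_C(H)$ I would compute via Grothendieck--Riemann--Roch the Mukai vector $v(\mathcal{E})=\big(h^0(C,E),\,2H,\,h^1(C,E)\big)$, whence $\langle v(\mathcal{E}),v(\mathcal{E})\rangle=4H^2-2\,h^0(C,E)\,h^1(C,E)=8g-8-2\,h^0(C,E)\,h^1(C,E)\le -8$. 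In particular $\ext^1(\mathcal{E},\mathcal{E})=\langle v(\mathcal{E}),v(\mathcal{E})\rangle+2<0$ is impossible for a simple sheaf, so $\mathcal{E}$ cannot be $\mu_H$-stable.

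Here the hypothesis $\Pic(X)=\Z\cdot H$ becomes decisive. Any saturated subsheaf $\mathcal{S}\subset\mathcal{F}$ has $c_1(\mathcal{S})=kH$; since $\mathcal{F}$ embeds in the slope-zero semistable sheaf $\mathcal{O}_X^{\oplus h^0(C,E)}$ we must have $k\le 0$, while $H^0(X,\mathcal{F})=0$ rules out $k=0$. Comparing slopes with $\mu_H(\mathcal{F})=-\tfrac{2H^2}{h^0(C,E)}$, the only subsheaves that can destabilize the necessarily unstable $\mathcal{F}$ have $c_1(\mathcal{S})=-H$ and $\rk\mathcal{S}>\tfrac12 h^0(C,E)$. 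I would then saturate such a maximal destabilizing $\mathcal{S}$ inside $\mathcal{O}_X^{\oplus h^0(C,E)}$ and restrict the resulting inclusion to $C$: taking determinants and rank-one pieces of $\mathcal{S}|_C$ produces a line bundle $A\in\Pic(C)$ carried by a proper subspace of $H^0(C,E)$, with $h^0(C,A)$ forced to be large relative to $\deg A$. The numerics of $A$ then give $\Cliff(A)<\lfloor\frac{g-1}{2}\rfloor$, contradicting the maximality of $\Cliff(C)$ and completing the argument.

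The main obstacle is exactly this last extraction: pinning down the line bundle $A$ from the destabilizing subsheaf, bounding $\deg A$ and $h^0(C,A)$ uniformly over the admissible ranks $\rk\mathcal{S}\in(\tfrac12 h^0(C,E),\,h^0(C,E))$, and treating the borderline $\mu_H(\mathcal{S})=\mu_H(\mathcal{F})$ where $\mathcal{E}$ is strictly semistable and one must instead argue with the Jordan--H\"older factors, each again constrained by $\Pic(X)=\Z\cdot H$. A secondary technical point is the passage to nodal $C\in|H|$: there $\iota_*E$ is only torsion-free on $X$, so I must check that the Lazarsfeld--Mukai kernel $\mathcal{F}$ stays locally free, that the Mukai-vector computation is unaffected, and that $\Cliff(C)$ and $\Cliff_2(C)$ on the singular curve are governed by the same inequalities.
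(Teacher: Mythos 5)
Your first half coincides with the paper's argument and is correct: the Lazarsfeld--Mukai kernel $F$ with $v(F)=(n,-2H,m)$ for $n=h^0(C,E)$, $m=h^1(C,E)$, the estimate $v(F)^2\leq -8$ under the assumed violation, the conclusion that $F$ cannot be stable, and the observation that $\Pic(X)=\Z\cdot H$ forces any destabilizing subsheaf to have $c_1=-H$ and rank $>\frac{n}{2}$. The genuine gap is precisely the step you flag as "the main obstacle": extracting from the restriction of the destabilizing subsheaf to $C$ a line bundle $A$ with $\Cliff(A)<\lfloor\frac{g-1}{2}\rfloor$. This extraction is never carried out, and it is not how the contradiction is obtained in the paper --- indeed there is no evident way to control $\deg A$ and $h^0(C,A)$ from $\mathcal{S}|_C$ across all ranks $\rk \mathcal{S}\in(\frac{n}{2},n)$. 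The paper's mechanism stays entirely on the surface: since $c_1(F)=-2H$ and $\Pic(X)=\Z\cdot H$, the Harder--Narasimhan filtration of $F$ has \emph{exactly two} factors, the maximal destabilizing subsheaf $G$ with $v(G)=(k,-H,\ell)$ and the quotient $M=F/G$ with $v(M)=(n-k,-H,m-\ell)$, and \emph{both are stable}; stability gives the moduli-space bounds $v(G)^2\geq -2$ and $v(M)^2\geq -2$, i.e.\ $k\ell\leq g$ and $(n-k)(m-\ell)\leq g$. Once one knows $k,\,n-k,\,\ell,\,m-\ell\geq 2$, the elementary fact that $x+y\leq \lceil\frac{g-1}{2}\rceil+2$ whenever $xy\leq g$ and $x,y\geq 2$ yields $n+m\leq 2\lceil\frac{g-1}{2}\rceil+4$, contradicting $n+m\geq g+4$ (which follows from $\Cliff(E)<\lfloor\frac{g-1}{2}\rfloor$). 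So the missing idea is to apply $v^2\geq -2$ to \emph{both} stable Harder--Narasimhan factors, rather than to descend to the curve.

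Two subsidiary points, both of which your sketch underestimates. First, the bound $\rk(M)=n-k\geq 2$ is not automatic: it comes from Lemma \ref{bigrank} applied to $0\to M\to N\to E\to 0$ with $N=\O_X^{\oplus n}/G$, which requires $N$ to be torsion-free (Lemma \ref{tf}); ruling out torsion is exactly where a curve-level Clifford argument does enter, since a torsion subsheaf $T\subset N$ injects into $E$ and is a rank-one sheaf on $C$ with $\Cliff(T)\leq \Cliff(E)<\Cliff(C)$, contradicting Proposition \ref{k3clifford} --- so your instinct of producing a rank-one sheaf on $C$ violating the Clifford bound is realized in the paper, but for the torsion of $N$, not for the destabilizing subsheaf. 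Second, that torsion-freeness argument uses the semistability of $E$, whereas your reduction (replacing $E$ by the subsheaf generated by global sections) destroys semistability when that subsheaf has rank $2$; the paper instead proves that a semistable sheaf computing $\Cliff_2(C)$ is automatically globally generated, via the Harder--Narasimhan filtration of the span of its sections. The strictly semistable case you defer at the end is genuinely needed but easy: either $v(F)$ is not $2$-divisible so $\bigl(\frac{v(F)}{2}\bigr)^2=-2$ is impossible (Proposition \ref{stable}), or one runs the same diagram with a maximal stable subsheaf $G\subset F$.
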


The definition of $\mbox{Cliff}_2(C)$ can be generalized in a straightforward manner to irreducible curves and torsion-free sheaves on them (see Section 1). The proof of Theorem \ref{k3mercat} uses \emph{Lazarsfeld-Mukai bundles} on $K3$ surfaces. Let $C\in |H|$ be a curve having at worst nodal singularities and $E$ a semistable rank $2$ vector bundle on $C$ which contributes to the second Clifford index $\mathrm{Cliff}_2(C)$. We may assume that $E$ is globally generated, then consider the vector bundle $F:=F_E$ obtained by an elementary transformation from the trivial vector bundle on $X$ using the sections of $E$:
$$0\longrightarrow F_E\longrightarrow H^0(C,E)\otimes \mathcal{O}_X\stackrel{\mathrm{ev}}\longrightarrow E\longrightarrow 0.$$
A crucial observation is that the Lazarsfeld-Mukai bundle $F_E$ is  Gieseker unstable whenever the inequality $\mbox{Cliff}(E)\leq \bigl \lfloor \frac{g-1}{2}\bigr \rfloor$ holds. Considering the not so numerous possibilities for the shape of the Harder-Narasimhan filtration of $F_E$, we quickly reach a contradiction with the assumption $\mbox{Pic}(X)=\mathbb Z\cdot H$. Thus no vector bundle $E$ on $C$ satisfying $\mbox{Cliff}(E)< \bigl \lfloor \frac{g-1}{2} \bigr \rfloor$ can exist in the first place, which establishes Theorem \ref{k3mercat}. The fact that the value of $\mbox{Cliff}(E)$ is tightly linked to the stability of the associated Lazarsfeld-Mukai bundle $F_E$ offers another strong indication that the definition of $\mbox{Cliff}_2(C)$ is the correct one to capture the geometry of rank $2$ vector bundles on generic curves.

\vskip 3pt

Our study of rank $2$ vector bundles on curves via Lazarsfeld-Mukai (L-M) bundles is reminiscent of Lazarsfeld's proof of the Brill-Noether Theorem \cite{La}, where crucially, the L-M vector bundle $F_A$ associated to a globally generated line bundle $A\in W^r_d(C)$ is non-simple precisely when the Brill-Noether number $\rho(g,r,d)=g-(r+1)(g-d+r)<0$. This implies that no linear series $A\in W^r_d(C)$ with $\rho(g,r,d)<0$ can exist on the curve  $C\in |H|$. A link between our vector bundle $F_E$ and the rank one situation discussed in \cite{La} can be obtained  via a degeneration of the Mukai system to the Hitchin system, see also \cite{DEL}. Letting the $K3$ surface $X$ degenerate to the cone over a smooth curve $C\in |H|$, the linear system $|2H|$ on $X$ specializes to the system of degree $2$ spectral covers of $C$. The vector bundle $F_E$ in turn specializes to a classical
L-M bundle $F_{A'}$ on a curve $C'\in |2H|$, where $A'\in \mbox{Pic}^{2\mu(E)+2g-2}(C')$. However, unlike the situation in \cite{La}, the curve $C'$ is far from being Brill-Noether general. Still, the corresponding vector bundle $F_{A'}$ displays enough regularity properties to prove Theorem \ref{k3mercat}.

\vskip 4pt
The proof of Theorem \ref{divisor} follows directly from Theorem \ref{k3mercat}, by observing that the rational curve induced by a Lefschetz pencil of curves in the linear system $|H|$ on a fixed general $K3$ surface $X$ of degree $2g-2$ is disjoint from the divisor $\overline{\mathfrak{Kosz}}_g$. In particular, the slope of the Koszul divisor is equal to the ratio $j^*(\delta_0)/j^*(\lambda)=6+\frac{12}{g+1}.$

\vskip 2pt

We close the introduction by mentioning another application that follows from Theorem \ref{k3mercat}. The classical Clifford index is known to be intimately related to the gonality of the curve, in the sense that for a general curve $C$, the Clifford index is computed by a line bundle $L\in \mbox{Pic}(C)$ with $h^0(C,L)=2$, and accordingly $\mbox{Cliff}(C)=\mbox{gon}(C)-2$, see \cite{Ba}. A similar result holds for rank $2$ vector bundles as well, provided the genus is not too small:

\begin{theorem}\label{4sect}
If $C$ is a general curve of even genus $g\geq 10$ and $E$ is a rank $2$ vector bundle on $C$ computing $\mathrm{Cliff}_2(C)$, then $h^0(C,E)=4$.
\end{theorem}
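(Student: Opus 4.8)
The plan is to combine the equality $\Cliff_2(C)=\Cliff(C)=(g-2)/2$ from Theorem \ref{mercat2} with a Mukai-vector analysis of the Lazarsfeld--Mukai bundle on a $K3$ specialization, pushing the Harder--Narasimhan argument of Theorem \ref{k3mercat} one notch further. Write $E$ for a bundle computing $\Cliff_2(C)$, so $E$ is semistable of rank $2$ with $\Cliff(E)=(g-2)/2$ and $n:=h^0(C,E)\ge 4$; the claim is that $n=4$. From $\Cliff(E)=\mu(E)-n+2$ I get $\deg(E)=g+2n-6$, and the constraint $\deg(E)\le 2g-2$ restricts $n$ to $4\le n\le g/2+2$. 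As in the proof of Theorem \ref{k3mercat} I may assume $E$ is globally generated without changing $h^0(C,E)$, and since the locus of curves carrying such a bundle is closed in $\cM_g$ while the curves $C\in|H|$ on a $K3$ surface $(X,H)$ with $\Pic(X)=\Z H$ and $H^2=2g-2$ fill up a dense subset, it suffices to exclude $5\le n\le g/2+2$ for a general such $C$.

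On the $K3$ surface I form the Lazarsfeld--Mukai bundle $F_E$ and compute, via Grothendieck--Riemann--Roch, its Mukai vector $v(F_E)=(n,-2H,g+4-n)$, so that $\langle v(F_E),v(F_E)\rangle=4H^2-2n(g+4-n)=2n^2-(2g+8)n+8g-8$. The key point is the behaviour of the Jordan--H\"older factors of $F_E$ under $\Pic(X)=\Z H$. Any $\mu$-semistable sheaf of Mukai vector $(n,-2H,\ast)$ would have $\mu$-stable factors of the common slope $-2(2g-2)/n$, hence of shape $(r_i,b_iH,s_i)$ with $r_i=m|b_i|$ where $m=n/\gcd(n,2)$; matching ranks and the third Mukai coordinate then forces at least one factor to satisfy $b_i^2H^2-2r_is_i<-2$ as soon as $n\ge 5$ (for odd $n$ the only option is that $F_E$ is itself $\mu$-stable, already contradicting $\langle v,v\rangle<-2$, while for $n=6$ two rank-$3$ factors would each have self-intersection $4-g$), which is impossible for a stable sheaf on a $K3$ surface. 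Only $n=4$ is compatible, since then $v(F_E)=2\cdot(2,-H,g/2)$ with $(2,-H,g/2)$ a genuine $(-2)$-class, the vector of the rigid bundle attached to the pencil computing $\Cliff(C)$. I would verify $b_i^2H^2-2r_is_i<-2$ over the full range by the elementary reduction to $g>2m$, valid since $n=2m\le g/2+2<g$.

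Consequently, for $5\le n\le g/2+2$ the bundle $F_E$ is genuinely $\mu$-unstable. The remaining and decisive step is to show that this $\mu$-instability is incompatible with $\Pic(X)=\Z H$, along the lines of the Harder--Narasimhan analysis in the proof of Theorem \ref{k3mercat}: the maximal destabilizing subsheaf of $F_E$ has first Chern class an integral multiple of $H$, and dualizing and restricting to $C$ I expect to extract from it either a sub-line-bundle or quotient line bundle of $E$, or a semistable rank $2$ bundle $E'$, whose degree and number of sections violate the bound $\Cliff(C)=(g-2)/2$ already established in Theorem \ref{k3mercat}. This is the step I expect to be the \textbf{main obstacle}, because Theorem \ref{k3mercat} only rules out $\Cliff(E)<(g-2)/2$ strictly, whereas here $E$ sits exactly at the borderline value; the argument must therefore exploit the strict negativity $\langle v(F_E),v(F_E)\rangle\le 2-2g$, valid for all $n\ge 5$, to convert the $\mu$-instability of $F_E$ into a genuine violation of the already-secured Mercat equality, rather than merely reproducing the equality case realized by $n=4$. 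Once the $\mu$-unstable configurations are excluded, $F_E$ must be $\mu$-semistable, and the second paragraph leaves $n=4$ as the only possibility.
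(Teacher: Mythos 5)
Your strategy reduces the theorem to a statement about curves on $K3$ surfaces, and this reduction fails at both ends for even genus. First, the claim that curves $C\in|H|$ on $K3$ surfaces with $\Pic(X)=\Z\cdot H$ ``fill up a dense subset'' of $\cM_g$ is false in the range of the theorem: for even $g\geq 12$ the parameter space of pairs $(X,C)$ has dimension $19+g<3g-3$, and for $g=10$ the $K3$ locus in $\cM_{10}$ is a proper subvariety (a divisor, by Mukai). Indeed, the paper's proof of Theorem \ref{4sect} opens by noting that a general curve of even genus $g\geq 10$ does \emph{not} lie on a $K3$ surface. So even a correct exclusion of $n\geq 5$ for generic $K3$ sections would tell you nothing about the general curve: your closed bad locus could perfectly well be a proper subvariety containing the entire (non-dense) $K3$ locus.

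Second, and fatally, the statement you aim to prove on the $K3$ side is false, so the ``main obstacle'' you flag in your last paragraph is insurmountable rather than technical. By Proposition \ref{equalclifford}, a curve $C\in|H|$ on a $K3$ surface with $\Pic(X)=\Z\cdot H$ and $g\neq 7,9$ \emph{does} carry semistable rank $2$ bundles with $\Cliff(E)=\Cliff(C)$ and $h^0(C,E)=\lceil\frac{g-1}{2}\rceil+2>4$, namely $E\cong M(H)_{|C}\cong M_{|C}\otimes K_C$ where $M$ is the Lazarsfeld--Mukai bundle of a minimal pencil; the paper states explicitly that ``for a generic $K3$ section this result is not true.'' Consequently the $\mu$-instability of $F_E$ that you correctly detect from $v(F_E)^2<-2$ when $n\geq 5$ is \emph{not} in conflict with $\Pic(X)=\Z\cdot H$: the Harder--Narasimhan pieces have $c_1=-H$ and Mukai vectors $\bigl(2,-H,\lceil\frac{g-1}{2}\rceil\bigr)$, of square $-2$ for even $g$, and this is exactly how the existing bundles arise. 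No refinement of the Mukai-vector bookkeeping can rule out objects that exist. The paper's actual argument runs in the opposite direction: for a general curve (off the $K3$ locus), Lemma \ref{even} forces any offending $E$ into an extension $0\to A\to E\to K_C(-A')\to 0$ with $A,A'\in W^1_{g/2+1}(C)$ computing $\Cliff(C)$; Voisin's surjectivity theorem for the maps $\mathrm{Sym}^2H^0(K_C(-A))\to H^0(K_C^{\otimes 2}(-2A))$ forces $A\neq A'$, whence $h^0(C,A\otimes A')\geq 4$ by the base point free pencil trick; specializing $C$ to a generic $K3$ section, Proposition \ref{equalclifford} forces the limiting pencils to coincide (since $\det E\cong K_C$ there), so semicontinuity gives $h^0(C,A^{\otimes 2})\geq 4$, i.e.\ a failure of the Petri map, contradicting Lazarsfeld's theorem \cite{La}. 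In other words, the $K3$ section serves as a degeneration endpoint where the structure of $E$ is rigid enough to produce a contradiction with facts established on the general curve --- not as a dense family from which the conclusion could be inherited.
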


We conjecture a similar result for a general curve $C$ of odd genus $g\geq 15$ as well. In Section 3 we show this is equivalent to a plausible maximal rank statement for a multiplication map of global sections of line bundles on $C$.

\section{Vector bundles on curves and the Mercat Conjecture}
Let $C$ be a smooth curve of genus $g\geq 4$ and $E$ be a semistable rank $r$ vector bundle on $C$ with $h^1(C,E)\geq h^0(C,E)\geq 2r$.  Recall that the Clifford index of $E$ is defined as the quantity
 \begin{equation}\Cliff(E):=\mu(E)-\frac{2}{r} h^0(C,E)+2=(g+1)-\frac{h^0(C,E)+h^1(C,E)}{r}\geq 0. \label{cliffdef}\end{equation}
Whenever we take the Clifford index of a vector bundle, we implicitly assume it contributes to the Clifford index, in the sense that it satisfies the above conditions.   Classically, Clifford's theorem asserts that for a line bundle $L$ contributing to the Clifford index, one has $\Cliff(L)\geq 0$.

\vskip 3pt

The $r$th Clifford index $\Cliff_r(C)$ of $C$ is defined as the minimum of $\Cliff(E)$ considered over all \emph{semistable} rank $r$ vector bundles $E$ contributing to the Clifford index, see \cite{LN1}.  We write $\Cliff(C)=\Cliff_1(C)$ for the classical Clifford index.
Taking $E$ to be a direct sum of line bundles of the same degree implies the obvious inequality $\Cliff_r(C)\leq \Cliff(C)$.

\vskip 3pt

Mercat \cite{Me} conjectured that for every smooth curve $C$, the equality
$$(M_r): \ \ \Cliff_r(C)=\Cliff(C).$$
should hold.  The conjecture $(M_2)$ amounts thus to the inequality
\begin{equation}\label{mercatin}
h^0(C,E)\leq 2+\mu(E)-\mathrm{Cliff}(C),
\end{equation}
for every semistable rank $2$ bundle $E$ with $\mu(E)\leq g-1$ and $h^0(C,E)\geq 4$. As already pointed out, inequality (\ref{mercatin}) is sharp for every curve $C$, for there exist strictly semistable bundles $E$ for which (\ref{mercatin}) is an equality. Conjecture $(M_2)$  holds for every curve $C$ of genus $g\leq 10$, see \cite{LN1} and was known to be false for curves of genus $g\geq 11$ lying on special $K3$ surfaces, see \cite{FO1},\cite{FO2}, \cite{LN2}. The Mercat Conjecture remained plausible
in the most interesting case, that of a general curve $C$, in which case the Brill--Noether theorem implies that $\Cliff(C)=\bigl \lfloor\frac{g-1}{2}\bigr \rfloor$. The main result of this paper is a proof of the generic conjecture $(M_2)$.

\vskip 4pt

For higher rank the situation is less clear and the evidence for the generic conjecture $(M_r)$ is somewhat weaker. Despite some positive results in \cite{LN1}, it has been shown in \cite{LMN} that conjecture $(M_3)$ fails for a \emph{general} curve of genus $9$ or $11$, while conjecture $(M_5)$ fails for a general curve of genus $7$, see \cite{AFO}. In each of these cases, the stable vector bundle $E$ on a general curve $C$ satisfying the inequality $\mbox{Cliff}(E)<\mbox{Cliff}(C)$ is (a twist of) the \emph{Mukai vector bundle} on $C$  introduced in \cite{M2} and  \cite{M3} in order to give structure theorems for the moduli space of curves of genus $g=7,8,9$. As pointed out in \cite{FO2} Theorem 1.5, the Mercat Conjectures $(M_3)$ and $(M_4)$ fail for any curve $C\in |H|$ of high enough genus lying on a $K3$ surface $X$ with $\mbox{Pic}(X)=\mathbb Z\cdot H$. Therefore the analogue of Theorem \ref{k3mercat} in higher rank is false.

\vskip 5pt

\subsection{Rank $2$ bundles with $4$ sections vs. rank $6$ quadrics} We often use a well-known equivalence between the following set of pairs of objects on a curve $C$, see
\cite{V}, but also \cite{GMN} or \cite{FO1},
\begin{equation}\label{equiv1}
\Bigl\{(E,V):V\subset H^0(C,E), \ \mbox{dim}(V)=4\Bigr\}\stackrel{1:1}\longleftrightarrow \Bigl\{(L,q):0\neq q\in K_{1,1}(C,L), \ \mbox{rk}(q)\leq 6\Bigr\},
\end{equation}
where, on one side, $E$ is a rank $2$ vector bundle on $C$ together with a space of sections $V$ generating $E$ and inducing a morphism $\varphi_V:C\rightarrow G(2,V^{\vee})$ to the Grassmannian $G(2,4)$, which is identified with the Pl\"ucker quadric. On the other side, $L\in \mbox{Pic}(C)$ is a globally generated line bundle and $q$ is a quadric hypersurface of rank at most $6$ containing the image curve $\varphi_L: C\rightarrow \pp \bigl(H^0(C,L)^{\vee}\bigr)$. Starting with the pair $(E,V)$, we set $L:=\mbox{det}(E)$ and denote by $\lambda: \bigwedge ^2 V \rightarrow H^0(C,L)$ the determinant map. We consider the following composition
$$C\stackrel{\varphi_V}\longrightarrow G\bigl(2, V^{\vee}\bigr) \longrightarrow \pp \Bigl(\bigwedge^2 V^{\vee}\Bigr) \stackrel{\pp(\lambda^{\vee})}\longleftarrow \pp \Bigl(H^0(C,L)^{\vee}\Bigr),$$
and set $q:=\varphi_L^{-1}\bigl(G(2,V^{\vee})\bigr)\in K_{1,1}(C,L)$. Concretely, if $(s_1,s_2,s_3,s_4)$ is a basis of $V$, then
$$q=\lambda(s_1\wedge s_2)\cdot \lambda(s_3\wedge s_4)-\lambda(s_1\wedge s_3)\cdot \lambda(s_2\wedge s_4)+\lambda(s_1\wedge s_4)\cdot \lambda(s_2\wedge s_3).$$
Observe that if $E$ contains no subpencils, (that is, line bundles $A\hookrightarrow E$ with $h^0(C,A)\geq 2$), then $\mbox{rk}(q)\in \{5,6\}$. Else, $\mbox{rk}(q)\leq 4$.

\vskip 3pt

As pointed out in \cite{Me}, the inequality (\ref{mercatin}) is valid for any semistable rank $2$ vector bundle $E$ having a subpencil. When $E$ has no subpencils, then $h^0(C,L)\geq 2h^0(C, E)-3$. In particular, if $E$ contributes to $\mbox{Cliff}_2(C)$ then $h^0(C, \mbox{det}(E))\geq 5$. This inequality and the equivalence (\ref{equiv1}) explain the peculiar form of the divisor $\mathfrak{Kosz}_g$ in Theorem \ref{divisor}.



\section{Moduli of sheaves on $K3$ surfaces}

Let $X$ be a smooth $K3$ surface with $\Pic(X)=\Z\cdot H$, where $H$ is an ample generator of degree $H^2=2g-2$.  We set $H^{\bullet}(X):=H^0(X,\mathbb Z)\oplus H^2(X, \mathbb Z)\oplus H^4(X, \mathbb Z)$. Following \cite{M1} or \cite{HL}, we define the \emph{Mukai pairing} on $H^{\bullet}(X)$ by
$$(v_0, v_1, v_2)\cdot (w_0. w_1, w_2):=v_1\cdot w_1-v_2\cdot w_0-v_0\cdot w_2\in H^4(X, \mathbb Z)\cong \mathbb Z.$$
For a sheaf $F$ on $X$, its \emph{Mukai vector} is defined following  \cite{M1} Definition 2.1, by setting
\[v(F):=\Bigl(\rk(F),\chone(F),\chi(F)-\rk(F)\Bigr).\]
Note that we have $-\chi(F,F)=v(F)^2$.   Throughout the paper, by (semi-)stability we mean Gieseker (semi-)stability with respect to the polarization $H$. We denote by $M_H(v)$ the moduli space of $S$-equivalence classes of $H$-semistable sheaves $F$ on $X$ having prescribed Mukai vector $v(F)=v$. Let $M_H^s(v)$ be the open subset of $M_H(v)$ corresponding to $H$-stable sheaves. Then it is known that $M_H^s(v)$ is pure dimensional and $\mbox{dim } M_H^s(v)=v^2+2$. In particular, if $v(F)^2<-2$ for a sheaf $F$ on $X$, then $F$ is not stable.

\vskip 3pt
For an irreducible possibly singular curve $C$ with surface singularities, we can still define the Clifford index of a (Gieseker) semistable torsion-free sheaf $E$ by formula \eqref{cliffdef}, where $g:=p_a(C)$ is the arithmetic genus. One can likewise define $\Cliff_r(C)$.  It is a simple observation that Lazarsfeld's result \cite{La} on the Brill--Noether generality of a hyperplane section of a general $K3$ surface extends to singular curves, which we include for completeness:
\begin{prop}\label{k3clifford} Let $X$ be a K3 surface with $\Pic(X)=\Z\cdot H$ and ample generator $H$ of degree $H^2=2g-2$.  For any section $C\in|H|$, one has $\Cliff(C)=\lfloor\frac{g-1}{2}\rfloor$, and any torsion-free sheaf $L$ computing the Clifford index has $h^0(C,L)=2$.
\end{prop}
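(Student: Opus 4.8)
The plan is to deduce Proposition~\ref{k3clifford} from Lazarsfeld's result on smooth curves \cite{La} together with the structure theory of Lazarsfeld--Mukai bundles, adapting it to the singular case. For a smooth curve $C\in|H|$, Lazarsfeld proved that $\Cliff(C)=\lfloor\frac{g-1}{2}\rfloor$; the task is to show that the argument is robust enough to survive the passage to nodal (or more general surface-singular) sections, and to pin down that the Clifford index is always computed by a pencil. Since $C$ is irreducible with arithmetic genus $g=p_a(C)$ and the notion of $\Cliff$ for torsion-free sheaves is in place via \eqref{cliffdef}, the inequality $\Cliff(C)\le\lfloor\frac{g-1}{2}\rfloor$ is automatic from any general pencil computing the classical Clifford index on the normalization or from direct sums; the content is the reverse inequality.

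First I would take a torsion-free sheaf $L$ on $C$ contributing to the Clifford index, so that $h^0(C,L)\ge 2$, $h^1(C,L)\ge h^0(C,L)$, and $\deg L\le g-1$ (equivalently $\mu(L)\le g-1$). After replacing $L$ by its globally generated part (or by a quotient that does not increase the Clifford index), I would form the Lazarsfeld--Mukai sheaf $F_L$ on $X$ by the elementary transformation
$$0\longrightarrow F_L\longrightarrow H^0(C,L)\otimes\mathcal{O}_X\stackrel{\mathrm{ev}}\longrightarrow L\longrightarrow 0,$$
viewing $L$ as a sheaf on $X$ via the inclusion $C\hookrightarrow X$. The key numerical input is to compute the Mukai vector $v(F_L)$ and hence $v(F_L)^2=-\chi(F_L,F_L)$ in terms of $\deg L$ and $h^0(C,L)$, exactly as in the rank~$2$ discussion sketched in the introduction. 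The point is that $v(F_L)^2$ is governed by the Clifford index: a too-small Clifford index forces $v(F_L)^2<-2$, which by the cited dimension count ($\dim M_H^s(v)=v^2+2$) means $F_L$ cannot be stable.

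Next I would exploit $\Pic(X)=\Z\cdot H$. Because all divisor classes are multiples of $H$, the possible destabilizing subsheaves of $F_L$ are extremely constrained: any Harder--Narasimhan or Jordan--H\"older factor has first Chern class a multiple of $H$, and semistability of $\mathcal{O}_X\otimes H^0(C,L)$ together with the rigidity of the Picard lattice severely limits the shape of any subsheaf. Following the strategy outlined for $F_E$ in the introduction, I would run through the short list of possible destabilizing configurations and derive a contradiction unless $\Cliff(L)\ge\lfloor\frac{g-1}{2}\rfloor$. Here the fact that $C$ may be singular requires care: $F_L$ is only torsion-free (not locally free) in general, so I would work with $v(F_L)^2$ and Gieseker stability for sheaves rather than slopes of bundles, and use that $M_H^s(v)$ being empty when $v^2<-2$ is insensitive to the singularities of $C$. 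For the final assertion that $h^0(C,L)=2$ whenever $L$ computes the Clifford index, I would observe that equality $\Cliff(L)=\lfloor\frac{g-1}{2}\rfloor$ can hold only in the boundary case of the numerical analysis, which forces $h^0(C,L)$ to its minimal value $2$; any $L$ with $h^0\ge 3$ would push $v(F_L)^2$ below $-2$ and again contradict $\Pic(X)=\Z\cdot H$.

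The main obstacle I anticipate is precisely the singular geometry of $C\in|H|$: the torsion-free sheaf $L$ on a nodal curve need not extend to a vector bundle on $X$, so the cleanest version of the Lazarsfeld--Mukai argument (for globally generated bundles on smooth curves) does not apply verbatim. I expect the crux to be verifying that $F_L$ is still torsion-free with the expected Mukai vector, that the evaluation map is surjective after passing to the globally generated subsheaf, and that the stability analysis of $F_L$ on $X$ goes through with $L$ supported on a singular curve. Once these regularity properties of $F_L$ are in hand, the contradiction with $\Pic(X)=\Z\cdot H$ is forced by the rank-one numerics, exactly paralleling the role of non-simplicity in Lazarsfeld's original proof of the Brill--Noether theorem.
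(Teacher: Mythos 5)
Your lower-bound argument is essentially the paper's own: form the Lazarsfeld--Mukai sheaf $F_L$ from the evaluation sequence of the globally generated part of $L$, use $\Pic(X)=\Z\cdot H$ to force stability of $F_L$, and read off $h^0(C,L)\cdot h^1(C,L)\le g$ from $v(F_L)^2\ge -2$, which together with $h^1\ge h^0\ge 2$ gives $\Cliff(L)\ge\lfloor\frac{g-1}{2}\rfloor$. Your contrapositive packaging (small Clifford index forces $v(F_L)^2<-2$, hence instability, then a Harder--Narasimhan analysis against $\Pic(X)=\Z\cdot H$) is logically the same thing, and in rank one the ``list of destabilizing configurations'' is in fact empty: since $c_1(F_L)=-H$ is primitive, any destabilizing subsheaf $G\subset F_L\subset\O_X^{\oplus n}$ would have $c_1(G)=0$, and a generic projection then makes $G$ trivial in codimension one, contradicting $H^0(X,F_L)=0$; this is exactly why the paper can simply say $F_L$ is ``automatically stable''. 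Your worry that $F_L$ is only torsion-free is also unfounded (the kernel of a surjection from $\O_X^{\oplus n}$ onto a pure one-dimensional sheaf on a smooth surface has depth two at every point, hence is locally free), but harmless, since the Gieseker formulation you propose suffices.

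The genuine gap is the upper bound $\Cliff(C)\le\lfloor\frac{g-1}{2}\rfloor$ for a \emph{singular} section $C\in|H|$, which you dismiss as ``automatic''. Your proposed justification via the normalization fails quantitatively: if $\nu:\tilde{C}\to C$ is the normalization and $\delta$ the delta-invariant, then $\nu_*$ of a degree-$d$ pencil on $\tilde{C}$ is a rank-one torsion-free sheaf of degree $d+\delta$ on $C$, so Brill--Noether existence on $\tilde{C}$ (which has genus $g-\delta$) only yields $\Cliff(C)\le\bigl\lfloor\frac{g-\delta-1}{2}\bigr\rfloor+\delta$, strictly weaker than needed once $\delta\ge 2$ (and $\delta$ can be as large as $g$ for sections of $|H|$); meanwhile ``direct sums'' are irrelevant to the rank-one index. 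The paper instead gets the upper bound by a degeneration argument you are missing: the relative compactified Jacobian of the universal divisor over $|H|$ is proper and parametrizes rank-one torsion-free sheaves on all sections, so a sheaf computing the Clifford index on a nearby smooth section (where Lazarsfeld's theorem applies) specializes to a sheaf on $C$ of the same degree with $h^0$ at least as large by semicontinuity, which therefore still contributes and has Clifford index at most $\lfloor\frac{g-1}{2}\rfloor$. (A smaller shared issue: both you and the paper assert that equality forces $h^0(C,L)=2$ purely from the numerics $h^0h^1\le g$; for $g=9$ this is not forced, since $h^0=h^1=3$, e.g.\ a $\mathfrak{g}^2_8$, also satisfies $v(F_L)^2=-2$ --- consistent with $g=9$ being treated as exceptional elsewhere in the paper.)
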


\begin{proof} The statement is true for a smooth section $C\in|H|$, see \cite{La}.  The relative compactified Jacobian of the universal divisor in the linear system $|H|$ parametrizes torsion-free sheaves supported on an element of $|H|$ and with rank $1$ on their support.  Thus, for any section $C\in|H|$, by semicontinuity, we certainly have $\Cliff(C)\leq \lfloor\frac{g-1}{2}\rfloor$.  Given a torsion-free sheaf $L$ on $C$ having minimal Clifford index, $L$ is necessarily globally generated.  Consider the exact sequence
\[0\longrightarrow F\longrightarrow \O_X\otimes H^0(C,L)\longrightarrow L\longrightarrow 0. \]
The vector bundle $F$ is automatically stable and $v(F)=\Bigl(h^0(C,L),-H,h^1(C,L)\Bigr)$. Furthermore, the inequality $v(F)^2\geq -2$ implies that $h^0(C,L)\cdot h^1(C,L)\leq g$, from which it follows that $\Cliff(L)\geq \lfloor\frac{g-1}{2}\rfloor$, as $h^1(C,L)\geq h^0(C,L)\geq 2$.  If we have equality, then necessarily $h^0(C,L)=2$.
\end{proof}

\vskip 4pt

Now consider a pure $1$-dimensional sheaf $E$ on the $K3$ surface $X$  supported on an irreducible curve $C\in |H|$. Let $r$ be its generic rank on its support.  Note the following simple fact:
\begin{lemma}\label{bigrank} With $E$ as above, suppose we have torsion-free sheaves $M,N$ on $X$ and an exact sequence
\[0\longrightarrow M\longrightarrow  N\longrightarrow E\longrightarrow 0.\]
Then $\rk(M)=\rk(N)\geq r$ with equality if and only if $M\cong N\otimes \mathcal{O}_X(-H)$ in codimension 1.
\end{lemma}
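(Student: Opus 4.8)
The plan is to reduce the entire statement to a local computation at the generic point $\eta$ of $C$, where $R := \mathcal{O}_{X,\eta}$ is a discrete valuation ring. Let $t \in R$ be a uniformizer; since $C \in |H|$ is an integral divisor, $t$ is a local equation for $C$ and generates $\mathcal{O}_X(-H)_\eta \subset R$, so that $t N_\eta = (N\otimes\mathcal{O}_X(-H))_\eta$ inside $N_\eta$. First, passing to the generic point of $X$ in the sequence $0\to M\to N\to E\to 0$ and using that $E$ is supported in codimension one at once yields $\rk(M)=\rk(N)=:n$.

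Next I would localize the sequence at $\eta$. As $M$ and $N$ are torsion-free on the integral surface $X$, their stalks $M_\eta, N_\eta$ are free $R$-modules of rank $n$. Purity of $E$, together with the hypothesis that $E$ has generic rank $r$ along the reduced curve $C$, identifies $E_\eta\cong(R/tR)^r$; in particular $t$ annihilates $E_\eta$. Applying the surjection $N_\eta\twoheadrightarrow E_\eta$ to $tN_\eta$ then shows $tN_\eta\subseteq M_\eta\subseteq N_\eta$, so that $E_\eta = N_\eta/M_\eta$ is a quotient of $N_\eta/tN_\eta\cong(R/tR)^n$. Comparing dimensions over the residue field $R/tR$ gives $r\leq n$, which is the asserted inequality, and equality holds if and only if the induced surjection $(R/tR)^n\to E_\eta$ is an isomorphism, i.e. if and only if $M_\eta = tN_\eta = (N\otimes\mathcal{O}_X(-H))_\eta$.

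Finally I would globalize this generic equality to an isomorphism in codimension one. Multiplication by a section $s\in H^0(X,\mathcal{O}_X(H))$ with divisor $C$ exhibits $N\otimes\mathcal{O}_X(-H)$ as a subsheaf of $N$ whose stalk at $\eta$ is exactly $tN_\eta$; at any other codimension-one point $\xi$, that is the generic point of a prime divisor $D\neq C$, the sheaf $E$ vanishes and $s$ is a unit, forcing $M_\xi = N_\xi = (N\otimes\mathcal{O}_X(-H))_\xi$. Thus $M$ and $N\otimes\mathcal{O}_X(-H)$ are subsheaves of $N$ that agree at every codimension-one point exactly when $r=n$, which is the precise meaning of being isomorphic in codimension one. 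I expect the only delicate point to be the identification $E_\eta\cong(R/tR)^r$: this is exactly where purity of $E$ and the generic smoothness of the integral curve $C$ enter, ensuring both that $t$ kills $E_\eta$ and that $E$ restricts to a rank-$r$ vector space over $R/tR$. The remaining steps are formal manipulations with the valuation.
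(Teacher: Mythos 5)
Your argument is correct in substance, and it takes a genuinely different route from the paper's. The paper argues globally with degrees: since $c_1(E)=rH$, the exact sequence gives $\deg(N)=\deg(M)+r$; since $E$ is scheme-theoretically supported on $C$, multiplication by the section $s$ defining $C$ maps $N\otimes\mathcal{O}_X(-H)$ into $M$, and comparing the degree of this subsheaf of $M$ with $\deg(M)$ yields $\rk(M)\geq r$, with equality forcing the cokernel of $N\otimes\mathcal{O}_X(-H)\hookrightarrow M$ to have trivial $c_1$, hence zero-dimensional support. You instead localize at the generic point $\eta$ of $C$, where the same factorization ($tN_\eta\subseteq M_\eta$) becomes a dimension count of vector spaces over the residue field $R/tR$. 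Both proofs pivot on the identical key fact, namely that $E$ is killed by the equation of $C$; your version avoids all Chern-class bookkeeping and pinpoints exactly where equality is decided (at $\eta$), while the paper's version is shorter given the Mukai-vector formalism already in place, and its degree computation makes both directions of the ``if and only if'' immediate.

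Two points in your write-up need repair, though neither is fatal. First, the identification $E_\eta\cong(R/tR)^r$ does \emph{not} follow from purity: the structure sheaf of the divisor $2C$ is pure of dimension one with set-theoretic support $C$, yet its stalk at $\eta$ is $R/t^2R$, which is not killed by $t$. What you actually need is that $E$ is scheme-theoretically supported on $C$, i.e.\ is an $\mathcal{O}_C$-module; this holds here because $E$ arises as a torsion-free sheaf on the curve $C$ itself, and it is precisely the fact the paper's proof invokes. Then $E_\eta$ is an $r$-dimensional vector space over the function field $\mathcal{O}_{C,\eta}=R/tR$ (integrality of $C$ suffices; generic smoothness plays no role). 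Second, your closing sentence conflates ``the two subsheaves of $N$ agree at every codimension-one point'' with ``$M\cong N\otimes\mathcal{O}_X(-H)$ in codimension one.'' The first does imply the second: the image of $N\otimes\mathcal{O}_X(-H)$ lies in $M$ globally (the composite $N\otimes\mathcal{O}_X(-H)\to N\to E$ vanishes since $E$ is an $\mathcal{O}_C$-module), and the coherent quotient of $M$ by this image has no codimension-one points in its support, hence codimension-two support; so your proof of the direction ``equality $\Rightarrow$ isomorphism in codimension one'' is fine. But the converse direction of the lemma needs an extra line, since an abstract isomorphism in codimension one need not identify the two sheaves as subsheaves of $N$: such an isomorphism gives $\det(M)\cong\det(N)\otimes\mathcal{O}_X(-nH)$, because line bundles on the smooth surface $X$ are determined by their restriction to the complement of a codimension-two set, while the exact sequence gives $\det(M)\cong\det(N)\otimes\mathcal{O}_X(-rH)$; hence $n=r$. (The paper leaves this direction implicit as well, but there it is immediate from the degree bookkeeping already set up.)
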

\begin{proof}  Let $\rk(M)=\rk(N)=k$.  Note that we have $\deg(N)=\deg(M)+r$. The sheaf $E$ is scheme-theoretically supported on $C$, so multiplication by the section defining $C$ gives a map $N\otimes \mathcal{O}_X(-H)\into N$ which must factor through $M$, so
\[\deg(N)-k\leq \deg(M)=\deg(N)-r,\]
hence the first claim.  If we have equality, then the injection $N\otimes \mathcal{O}_X(-H)\into M$ is an isomorphism in codimension $1$.
\end{proof}

\vskip 4pt

Assume now that $E$ is a globally generated torsion-free sheaf of rank $r$ on a curve $C\in |H|$ contributing to the Clifford index $\mbox{Cliff}_r(C)$, therefore satisfying
$$\mu(E)\leq g-1 \ \ \mbox{ and } \  \ h^0(C,E)\geq 2r.$$
\begin{definition}
The \emph{Lazarsfeld--Mukai vector bundle} \footnote{Usually the Lazarsfeld--Mukai bundle refers to $F^\vee$, see \cite{GL}, \cite{La} or \cite{AFO}.} $F:=F_{E}$ is defined as the kernel of the evaluation map given by the global sections of $E$:
\begin{equation}\label{lm}
0\longrightarrow  F_E\longrightarrow \O_X\otimes H^0(C,E)\stackrel{\ev}\longrightarrow E\longrightarrow 0.
\end{equation}
\end{definition}

Lazarsfeld-Mukai vector bundles induced by line bundles on curves $C\in |H|$ have been used to great effect effect to prove the Brill-Noether-Petri Theorem \cite{La}, or the generic Green's Conjecture \cite{V}. However, the definition (\ref{lm}) makes sense for a vector bundle $E$ of arbitrary rank on $C$.

Letting $h^1(C,E)=m\geq h^0(C,E)=n$, we compute $v(F)=(n,-rH,m)$ and clearly $H^0(X,F)=H^1(X,F)=0$. There exists a generically surjective morphism $\mathcal{O}_X^{\oplus \mathrm{rk}(F)}\rightarrow F^{\vee}$. Furthermore, one computes
 \begin{align}
 v(F)^2&=-2nm+r^2(2g-2)\notag\\
 &=-2(n-2r)(m-2r)+4r^2\Bigl(\Cliff(E)-\frac{g-1}{2}\Bigr)\label{equality}
 \end{align}
If we assume $\Cliff(E)<\lfloor\frac{g-1}{2}\rfloor$, then since $\Cliff(E)\in \frac{1}{r}\Z$, we have
 \begin{equation}v(F)^2\leq \begin{cases}
 -4r,& \ \mbox{ if } g\mbox{ is odd}\\
 -2r(r+1),& \ \mbox{ if } g \mbox{ is even and } r \mbox{ is odd}\\
 -2(r+2),& \ \mbox{ if } g \mbox{ and } r \mbox{ are both even}
 \end{cases}\label{bogo}\end{equation}
 which implies the following:

\begin{prop}\label{stable} Assume $E$ is a globally generated torsion-free sheaf of rank $r>1$ on a curve $C\in |H|$ contributing to $\mathrm{Cliff}_r(C)$ and such that $\Cliff(E)<\lfloor\frac{g-1}{2}\rfloor$.  Then the associated Lazarsfeld-Mukai vector bundle $F_E$ is not stable.  If $r=2$, then $F_E$ is not even semistable.
\end{prop}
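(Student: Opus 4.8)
The plan is to extract both conclusions from the Mukai vector $v(F)=(n,-rH,m)$, where $n=h^0(C,E)$ and $m=h^1(C,E)$, computed just above. By \eqref{bogo}, which combines the identity \eqref{equality} with the hypothesis $\Cliff(E)<\lfloor\frac{g-1}{2}\rfloor$, one has $v(F)^2<-2$ for every $r>1$. Since the stable locus $M_H^s(v)$ is empty or of dimension $v^2+2$, the inequality $v(F)^2+2<0$ gives $M_H^s\bigl(v(F)\bigr)=\emptyset$; as $F=F_E$ carries the Mukai vector $v(F)$, it cannot be stable. This settles the first assertion for all $r>1$, and the remaining work is to upgrade ``not stable'' to ``not semistable'' when $r=2$, where \eqref{bogo} reads $v(F)^2\leq -8$.

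So assume $r=2$ and, for contradiction, that $F$ is $H$-semistable. As $v(F)^2<-2$ it is not stable, hence strictly semistable, and I pass to its Jordan--Hölder filtration: the graded pieces are $H$-stable sheaves sharing the reduced Hilbert polynomial of $F$, in particular the slope $\mu(F)=-2H^2/n$. The decisive input is $\Pic(X)=\Z\cdot H$: a stable factor $A$ has $c_1(A)=d_AH$ with $d_A\in\Z$, and $\mu(A)=\mu(F)$ forces $\rk(A)=|d_A|\,n/2$. Since $\rk(A)\leq n$ with equality only for $A=F$, every proper factor has $d_A=-1$ and $\rk(A)=n/2$. If $n$ is odd no such factor exists, so a semistable $F$ would in fact be stable, a contradiction; thus $n$ is even and there are exactly two factors $A_1,A_2$ with $v(A_i)=(n/2,-H,w_i)$ and $w_1+w_2=m$.

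Now I play the Mukai pairings against $v(F)^2\leq -8$. Stability gives $v(A_i)^2\geq -2$. If $A_1\not\cong A_2$, then as distinct stable sheaves of the same phase they satisfy $v(A_1)\cdot v(A_2)\geq 0$, whence
$$v(F)^2=v(A_1)^2+v(A_2)^2+2\,v(A_1)\cdot v(A_2)\geq -4,$$
contradicting $v(F)^2\leq -8$. Therefore $A_1\cong A_2$, so $w_1=w_2=m/2$ (in particular $m$ is even), $v(F)=2\,v(A_1)$ and $v(F)^2=4\,v(A_1)^2\geq -8$. Equality must hold, so $v(A_1)^2=-2$, i.e. $A_1$ is rigid; writing this out gives $nm=4g$.

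It remains to rule out this single borderline configuration, and this is where I expect the real difficulty to lie. Set $n=2a$ and $m=2b$, so that $ab=g$ with $2\leq a\leq b$, and recall $\Cliff(E)=g+1-\frac{n+m}{2}=g+1-(a+b)$. Since $a+b=a+g/a$ is maximized at the smallest admissible $a$, the point is that the resulting lower bound for $\Cliff(E)$ already reaches $\lfloor\frac{g-1}{2}\rfloor$: for $g$ even one has $a\geq 2$ and $\Cliff(E)\geq \frac{g}{2}-1=\lfloor\frac{g-1}{2}\rfloor$, while for $g$ odd the parity of $ab=g$ forces $a$ odd, hence $a\geq 3$, giving $\Cliff(E)\geq \frac{2g}{3}-2\geq\lfloor\frac{g-1}{2}\rfloor$ once $g\geq 9$. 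Either way $\Cliff(E)\geq\lfloor\frac{g-1}{2}\rfloor$ contradicts the standing hypothesis $\Cliff(E)<\lfloor\frac{g-1}{2}\rfloor$, so no strictly semistable $F$ exists and $F_E$ is not semistable. The delicate part of the write-up is precisely this last step: reconciling the rigidity constraint $nm=4g$ with the Clifford bound, where the parity dichotomy between even and odd $g$ (and the small odd genera $g<9$, to be inspected separately) must be handled with care.
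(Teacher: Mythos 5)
Your proof is correct, and while it dispatches the first statement exactly as the paper does (by \eqref{bogo}, a stable sheaf would have $v(F)^2\geq -2$), its treatment of the $r=2$ case finishes differently. Both arguments begin the same way: $\Pic(X)=\Z\cdot H$, negativity of the slope, and purity force any proper Jordan--H\"older factor of a strictly semistable $F$ to have $c_1=-H$ and rank $n/2$. From there the paper argues by divisibility: the two factors share the reduced Hilbert polynomial of $F$, hence each has Mukai vector exactly $v(F)/2$, so $v(F)$ must be $2$-divisible with $\bigl(\tfrac{v(F)}{2}\bigr)^2=-2$; for odd $g$ the equality analysis of \eqref{equality} forces $n=4$ and $nm=4g$, whence $m=g$ is odd and $v(F)$ is not $2$-divisible, a contradiction (for even $g$, \eqref{equality} in fact gives $v(F)^2\leq-16$, so the value $-8$ is unreachable). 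You instead split on whether the two factors are isomorphic: the non-isomorphic case is killed by the standard fact that distinct stable sheaves with the same reduced Hilbert polynomial satisfy $v(A_1)\cdot v(A_2)=\ext^1(A_1,A_2)\geq 0$, and the isomorphic case reduces to $nm=4g$ with $n=2a$, $m=2b$, which you exclude via the identity $\Cliff(E)=g+1-(a+b)=(a-1)(b-1)$ for $ab=g$, yielding $\Cliff(E)\geq\lfloor\frac{g-1}{2}\rfloor$ uniformly in the parity of $g$. Two observations. First, your case split is avoidable: the Gieseker matching you already invoked determines $\chi(A_i)/\rk(A_i)$, hence $w_i=m/2$ and $v(A_i)=v(F)/2$ for both factors whether or not they are isomorphic, so the Mukai-pairing step can be skipped. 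Second, the ``separate inspection'' you defer for odd $g<9$ is vacuous by your own parity remark: $a,b$ odd with $2\leq a\leq b$ forces $a\geq 3$, hence $g=ab\geq 9$, so the borderline configuration simply does not occur in that range. On balance, the paper's divisibility-plus-parity trick is shorter; your endgame is uniform in the parity of $g$, needs no discussion of primitivity of $v(F)$, and makes transparent why the borderline $v(F)^2=-8$ is incompatible with the hypothesis $\Cliff(E)<\lfloor\frac{g-1}{2}\rfloor$.
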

\begin{proof}  The first statement follows from \eqref{bogo} since every stable sheaf is simple.  For the second statement, $v(F)$ is at most 2-divisible so, if $F$ is strictly semistable we must have $$\Bigl(\frac{v(F)}{2}\Bigr)^2=-2.$$  By \eqref{bogo}, this is clearly impossible if $g$ is even; for $g$ odd, $v(F)^2=-8$ implies both $4g=nm$ and $n=4$, so $v(F)$ cannot be divisible.\end{proof}

We can also extract information about $E$ with $\Cliff(E)=\lfloor\frac{g-1}{2}\rfloor$ from the same computation:

\begin{prop}\label{equalcliff}Let $E$ be a globally generated torsion-free sheaf of rank $2$ on $C\in |H|$ contributing to $\mathrm{Cliff}_2(C)$ and such that $\Cliff(E)=\lfloor\frac{g-1}{2}\rfloor$.  Further assume $h^0(C,E)>4$.  Then $F_E$ is not stable unless $g=7$, and not semistable unless $g=7$ or $9$.
\end{prop}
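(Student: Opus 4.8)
The plan is to rerun the Mukai-vector bookkeeping from the proof of Proposition~\ref{stable}, but now inserting the \emph{exact} value $\Cliff(E)=\lfloor\frac{g-1}{2}\rfloor$ and tracking precisely when the resulting numerical constraints can be satisfied. Writing $n=h^0(C,E)$ and $m=h^1(C,E)$, we have $v(F_E)=(n,-2H,m)$ with $m\geq n>4$, hence $n,m\geq 5$. Specializing \eqref{equality} to $r=2$ gives
\[
v(F_E)^2=
\begin{cases}
-2(n-4)(m-4), & g\text{ odd},\\
-2(n-4)(m-4)-8, & g\text{ even},
\end{cases}
\]
the extra $-8$ in the even case coming from $\lfloor\frac{g-1}{2}\rfloor-\frac{g-1}{2}=-\tfrac12$. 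The whole proposition then reduces to reading off which pairs $(n,m)$ are compatible with stability, respectively semistability, and solving for $g$ using $\Cliff(E)=(g+1)-\frac{n+m}{2}=\lfloor\frac{g-1}{2}\rfloor$.

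First I would settle stability. Since a stable sheaf is simple, $v(F_E)^2\geq -2$. For $g$ odd this forces $(n-4)(m-4)\leq 1$, while $n,m\geq 5$ forces $(n-4)(m-4)\geq 1$; hence $n=m=5$, and the slope identity gives $g=7$. For $g$ even the inequality becomes $(n-4)(m-4)\leq -3$, which is impossible, so $F_E$ is never stable. Thus $F_E$ can be stable only when $g=7$.

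For semistability I would argue that a strictly semistable $F_E$ forces $v(F_E)$ to be $2$-divisible. Since $c_1(F_E)=-2H$ and $H$ is primitive, the only candidate divisor is $2$; a short computation comparing both the slope and the $\chi/\mathrm{rk}$ coefficient of a Jordan--H\"older factor with those of $F_E$ shows that a stable subsheaf of equal reduced Hilbert polynomial and smaller rank can exist only when \emph{both} $n$ and $m$ are even, in which case $v(F_E)=2w$ with $w=(n/2,-H,m/2)$ primitive. The Jordan--H\"older factors are then two stable sheaves with Mukai vector $w$, so $w^2\geq -2$, i.e.\ $v(F_E)^2\geq -8$. For $g$ odd this yields $(n-4)(m-4)\leq 4$, whereas $n,m$ even and $>4$ force $(n-4)(m-4)\geq 4$; hence $n=m=6$ and the slope identity gives $g=9$. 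For $g$ even the bound $v(F_E)^2\geq -8$ forces $(n-4)(m-4)\leq 0$, impossible. Combined with the stability analysis, $F_E$ is semistable only for $g\in\{7,9\}$.

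The step I expect to be the main obstacle is precisely the $2$-divisibility argument: one must verify that strict semistability cannot occur when $v(F_E)$ is primitive (equivalently, when $n$ or $m$ is odd), and that when it does occur the Jordan--H\"older factors are genuinely two copies of a single stable bundle with $w^2\geq -2$. Everything else is a finite numerical check, but the entire conclusion hinges on this structural input pinning the answer down to the isolated genera $g=7$ and $g=9$.
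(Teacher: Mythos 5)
Your proof is correct and takes essentially the same route as the paper: the paper's proof likewise specializes \eqref{equality} at $\Cliff(E)=\lfloor\frac{g-1}{2}\rfloor$ (giving $v(F_E)^2\leq -10$ for even $g$, and $v(F_E)^2=-2(n-4)(m-4)$ with $n+m=g+3$ for odd $g$), then invokes $v^2\geq -2$ for stable sheaves and the at-most-$2$-divisibility of $v(F_E)$ for strict semistability, exactly as established in the proof of Proposition \ref{stable}. Your write-up simply makes explicit the ``case-by-case analysis'' the paper leaves to the reader, including the verification that a Jordan--H\"older factor must have Mukai vector $\tfrac{1}{2}v(F_E)$, which forces $n,m$ both even.
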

\begin{proof}  For $g$ even the claim is immediate from \eqref{equality} since $v(F)^2\leq -10$.  If $g$ is odd, then $v(F)^2\leq -2(n-4)(m-4)$ and $m+n=g+3$ imply the rest of the claim after a case-by-case analysis.
\end{proof}


Assuming the Lazarsfeld--Mukai bundle $F$ is not semistable, let $G\subset F$ denote the maximal destabilizing sheaf of highest rank, which is necessarily saturated and semistable.  Then we have the following diagram, where we recall that $n=\mbox{rk}(F)$:

\begin{equation}\label{diagram}
\begin{gathered}\xymatrix{
&0\ar[d]&0\ar[d]&&\\
&G\ar@{=}[r]\ar[d]&G\ar[d]&&\\
0\ar[r]&F\ar[r]\ar[d]&\O_X^{\oplus n}\ar[r]^{\ev}\ar[d]&E\ar@{=}[d]\ar[r]&0\\
0\ar[r]&M\ar[r]\ar[d]&N\ar[r]\ar[d]&E\ar[r]&0\\
&0&0&&
}\end{gathered}
\end{equation}

The sheaf $G$, on the one hand, has slope $\mu(G)> \mu(F)$, on the other hand being a subsheaf of $\O_X^{\oplus n}$ has slope $\mu(G)\leq 0$.  The sheaf $M$ is torsion-free, so $G$ is in fact locally free, and therefore cannot be of degree 0, for a generic projection $\O_X^{\oplus n}\into \O_X^{\oplus \rk(G)}$ would give $G\cong \O_X^{\oplus \rk(G)}$ (indeed the determinant of $G\into \O_X^{\oplus \rk(G)}$ is nowhere vanishing), whereas $H^0(X,G)=0$.  Thus, $\mu(G)<0$, so all of the Harder--Narasimhan sub-quotients of $F$ have negative slope, and therefore $H^0(X,M)=0$, whence $H^1(X,G)=0$, and $G$ once again looks like a Lazarsfeld--Mukai bundle.

\vskip 4pt

We can now make some preliminary steps towards the proof of Theorem \ref{k3mercat}.  First, note that a rank 2 (torsion-free) semistable sheaf $E$ on $C$ computing the Clifford index $\Cliff(E)=\Cliff_2(C)\leq \Cliff(C)$ is necessarily globally generated.  Indeed, if not, we have an exact sequence
\[0\longrightarrow  E'\longrightarrow E\longrightarrow P\longrightarrow  0, \]
where $E'$ is the subsheaf of $E$ generated by global sections.  It is easy to see by the assumption on the Clifford index that $E'$ has rank 2, and $\Cliff(E')< \Cliff(E)$.  $E'$ must not be semistable, so there is a Harder--Narasimhan filtration
\[0\into A\into E'\into A'\into 0.\]
On the one hand, $E$ cannot contain a subpencil, but on the other hand by the semistability of $E$ we must have
\[\mu(E)\geq \deg(A)\geq \mu(E')\geq \deg(A').\]
so $A'$ contributes to the rank 1 Clifford index.  We then have $\Cliff(A')<\Cliff(E')$, which is a contradiction.

Whenever the Lazarsfeld--Mukai bundle $F=F_E$ is not semistable, the first column of \eqref{diagram} is the Harder--Narasimhan filtration of $F$ and each of $G$ and $M$ is of degree $-1$, that is, $c_1(G)=c_1(M)=-H$. Indeed, since $c_1(G)=-H$ and $c_1(F)=-2H$, because of the assumption $\mbox{Pic}(X)=\mathbb Z\cdot H$, there is no room for further bundles in the Harder--Narasimhan filtration of $F$, that is, both $G$ and $M$ are stable.   The same is true if we assume $F$ is strictly semistable and let $G\subset F$ be a maximal stable subsheaf in \eqref{diagram}.  For concreteness, set $$v(G)=(k,-H,\ell),$$ so that $v(N)=(n-k,H,n-\ell)$.  Note that since $\mu(G)\geq \mu(F)$, we must have $n>k\geq \frac{n}{2}\geq 2$.

\begin{lemma}\label{tf}
\begin{enumerate}
\item If $\Cliff(E)<\lfloor\frac{g-1}{2}\rfloor$, then the sheaf $N$ is torsion-free.
\item If $\Cliff(E)=\lfloor\frac{g-1}{2}\rfloor$ and $h^0(C,E)>4$, then $N$ is torsion-free unless $g=7$.
\end{enumerate}
\end{lemma}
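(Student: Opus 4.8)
Suppose for contradiction that the torsion subsheaf $T\subset N$ is nonzero. The plan is to show that $T$ must arise as a subpencil of $E$, and then to play the Clifford index of $T$ on $C$ (controlled by Proposition \ref{k3clifford}) against that of $E$. First I would locate $T$ inside $E$: since $M$ is torsion-free and $M\hookrightarrow N$ with $N/M=E$, we have $T\cap M=0$, so $T$ injects into $E$. As $E$ is pure of dimension $1$, the subsheaf $T$ is itself pure $1$-dimensional, so $c_1(T)=aH$ with $a\geq 1$. Let $\bar{G}\subset\O_X^{\oplus n}$ be the preimage of $T$, equivalently the saturation of $G$; then $\O_X^{\oplus n}/\bar{G}=N/T$ is torsion-free, and since $\bar{G}$ is a subsheaf of the slope-$0$ semistable sheaf $\O_X^{\oplus n}$ we get $\mu(\bar{G})\leq 0$, which forces $a\leq 1$. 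Hence $a=1$ and $c_1(\bar{G})=0$, so a generic projection $\bar{G}\to\O_X^{\oplus k}$ has determinant a nonzero constant and $\bar{G}\cong\O_X^{\oplus k}$.

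From $0\to G\to\bar{G}=\O_X^{\oplus k}\to T\to 0$ together with $H^0(X,G)=H^1(X,G)=0$ I read off that $T$ is a globally generated rank $1$ torsion-free sheaf on $C$ with $h^0(C,T)=k$ and $v(T)=(0,H,k-\ell)$, so that $h^1(C,T)=\ell\geq 0$ and $\deg_C T=g-1+k-\ell$; in other words $G=F_T$ is again a Lazarsfeld--Mukai bundle. Since $k\geq \tfrac{n}{2}\geq 2$, the sheaf $T$ is a subpencil of $E$. The semistability of $E$ gives $\deg_C T\leq\mu(E)$, which reads $k\leq\ell$; in particular $\deg_C T\leq g-1$, so $T$ contributes to the Clifford index, and Proposition \ref{k3clifford} yields $\Cliff(T)=g+1-k-\ell\geq\Cliff(C)$, with equality only if $k=2$.

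For part (a) this already finishes the argument: a semistable rank $2$ sheaf possessing a subpencil satisfies the Mercat inequality \eqref{mercatin}, i.e. $\Cliff(E)\geq\Cliff(C)=\lfloor\tfrac{g-1}{2}\rfloor$, contradicting the hypothesis $\Cliff(E)<\lfloor\tfrac{g-1}{2}\rfloor$. Concretely, writing $Q=E/T$ one has $\Cliff(E)\geq\tfrac12\bigl(\Cliff(T)+\Cliff(Q)\bigr)$ from $h^0(C,E)\leq h^0(C,T)+h^0(C,Q)$ and $\mu(E)=\tfrac12(\deg_C T+\deg_C Q)$, and both summands are at least $\Cliff(C)$ once one checks $Q$ contributes. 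Thus $N$ is torsion-free.

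For part (b) the hypothesis $h^0(C,E)=n>4$ upgrades $k\geq\tfrac{n}{2}$ to $k\geq 3$, so the equality clause of Proposition \ref{k3clifford} is excluded and $\Cliff(T)>\Cliff(C)$ strictly. Feeding this into $\Cliff(E)\geq\tfrac12\bigl(\Cliff(T)+\Cliff(Q)\bigr)$ together with $\Cliff(E)=\Cliff(C)$ forces $\Cliff(Q)<\Cliff(C)$, so the globally generated rank $1$ quotient $Q$ fails to contribute to $\Cliff(C)$. I would then combine this with the two stability bounds coming from \eqref{diagram}, namely $k\ell\leq g$ (from $G$) and $(n-k)(m-\ell)\leq g$ (from $M$), with the normalization $n+m=2\bigl(g+1-\Cliff(C)\bigr)$ and with $h^1(C,T)=\ell\geq 0$, and run the case analysis according to whether $Q$ fails to contribute because $h^0(C,Q)\leq 1$ (forcing $Q\cong\O_C$) or because $h^1(C,Q)\leq 1$. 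Each branch is numerically contradictory except when all the inequalities degenerate at once, which happens precisely when $v(F)^2=-2$; by \eqref{equality} this occurs only for $g=7$ (where $n=m=5$), and there Proposition \ref{equalcliff} permits $F$ to be stable, so that the filtration \eqref{diagram}, and with it $N$, need not exist. This final bookkeeping, isolating $g=7$ as the sole value escaping the numerical contradiction, is the step I expect to be the main obstacle; the earlier structural reductions are routine.
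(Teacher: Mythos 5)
Your structural reductions agree with the paper's: the torsion subsheaf $T\subset N$ injects into $E$, the saturation $\bar{G}$ of $G$ is trivial (the paper equivalently shows $N/T\cong\O_X^{\oplus(n-k)}$), and consequently $h^0(C,T)=k$, $h^1(C,T)=\ell$, $v(T)=(0,H,k-\ell)$, so that $T$ is a subpencil of $E$ of degree at most $g-1$. For part (a) your route --- quoting that a semistable rank $2$ sheaf with a subpencil satisfies \eqref{mercatin} --- is defensible, but note that this fact is stated in Section 1 for vector bundles on smooth curves, and your ``concrete'' justification of it is incomplete: it silently assumes that $E/T$ contributes to the Clifford index, which need not hold (its degree may exceed $g-1$, or it may have at most one section), so one would have to redo Mercat's full case analysis for torsion-free sheaves on a nodal curve via Proposition \ref{k3clifford}. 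The paper's argument needs none of this.

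The genuine gap is part (b), which you explicitly leave unproven, and it stems from discarding half of the semistability inequality. Applying Gieseker semistability of $E$ to $T\subset E$ gives $\chi(T)\leq\frac{1}{2}\chi(E)$, i.e. $k-\ell\leq\frac{n-m}{2}$, of which you retained only the weaker consequence $k\leq\ell$. Keeping the full inequality and combining it with $k\geq\frac{n}{2}$ (from $\mu(G)\geq\mu(F)$) yields
\[0\leq k-\frac{n}{2}\leq \ell-\frac{m}{2},\]
hence $k+\ell\geq\frac{n+m}{2}$, i.e. $\Cliff(T)\leq\Cliff(E)$. This one inequality is the paper's entire proof: in case (a) it gives $\Cliff(T)\leq\Cliff(E)<\lfloor\frac{g-1}{2}\rfloor$, contradicting Proposition \ref{k3clifford}; in case (b) it forces $\Cliff(T)=\Cliff(C)$, so $T$ computes the Clifford index, so $k=h^0(C,T)=2$ by the equality clause of Proposition \ref{k3clifford}, contradicting $k\geq\frac{n}{2}>2$. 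Lacking it, your plan for (b) --- showing $E/T$ cannot contribute and then deriving numerical contradictions from $k\ell\leq g$ and $(n-k)(m-\ell)\leq g$ --- defers precisely the step that constitutes the proof (``the main obstacle''), and while that bookkeeping can in fact be pushed through, it is substantially more involved than the argument above and you have not carried it out, nor verified your claim that only $g=7$ escapes it. Finally, your scheme never addresses $g=9$: there Proposition \ref{equalcliff} only excludes stability of $F$, so $F$ may be strictly semistable, the maximal destabilizing subsheaf does not exist, and diagram \eqref{diagram} must be rebuilt from a Jordan--H\"older filtration with $k=\frac{n}{2}$ allowed; the displayed inequality survives this replacement verbatim, which is how the paper covers $g=9$, whereas a plan resting on the Harder--Narasimhan filtration does not.
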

\begin{proof}  Let $T\subset N$ be its torsion subsheaf, and $Q$ the quotient:
\[0\longrightarrow T\longrightarrow N\longrightarrow  Q\longrightarrow 0\]
Suppose $T$ is nonzero.  Then $Q$ is a rank $n-k$ torsion-free sheaf of degree $0$, with a surjection $\O_X^{\oplus n}\into Q$, so choosing a generic subsheaf $\O_X^{\oplus (n-k)}\subset \O_X^{\oplus n}$, we again have an isomorphism $Q\cong \O_X^{\oplus (n-k)}$.  This means that
\[v(T)=(0,H,k-\ell)\]
and further since $N\into Q$ must be surjective on global sections (since $\O_X^{\oplus n}\into N$ is an isomorphism on global sections), it follows that $h^0(X,T)=k$ and $h^1(X,T)=\ell$.  Since $M$ is torsion-free, clearly $T$ injects into $E$ and is therefore a purely $1$-dimensional rank 1 sheaf on $C$.  By the semistability of $E$ we then have
\[k-\ell\leq \frac{h^0(C,E)-h^1(C,E)}{2}\Rightarrow 0\leq k-\frac{h^0(C,E)}{2}\leq \ell-\frac{h^1(C,E)}{2}\]
using that $n=h^0(C,E)$.  It then follows that $\Cliff(T)\leq \Cliff(E)$, which contradicts the assumption in part (a).

\vskip 4pt

For part (b), first assume $g\neq 9$.  Then we would have $\Cliff(T)=\lfloor\frac{g-1}{2}\rfloor$, so by Proposition \ref{k3clifford} it must be the case that $k=h^0(C,T)=2$, whence $h^0(C,E)=4$ and we again have a contradiction.  The same proof works for $g=9$ as long as we now take the first column of \eqref{diagram} to be a Jordan-H\"older filtration of $F$.
\end{proof}

Now using Lemma \ref{bigrank}, we can conclude the proof of Mercat's Conjecture $(M_2)$ for curves lying on generic $K3$ surfaces.

\begin{proof}[Proof of Theorem \ref{k3mercat}]  We consider the sheaf $E$ on a curve $C\in |H|$ as above and assume $\Cliff(E)<\lfloor\frac{g-1}{2}\rfloor$.  Since the sheaves $G$ and $M$ defined via the diagram \eqref{diagram} are both stable, we must have the following inequalities:
\begin{align*}
v(G)^2\geq -2&\Rightarrow g\geq k\ell\\
v(M)^2\geq -2&\Rightarrow g\geq (n-k)(m-\ell)
\end{align*}
We know $k\geq 2$ and by Lemmas \ref{bigrank} and \ref{tf}(a), we have $n-k\geq 2$.  If $\ell\leq 1$, then we write
$$g\geq 2(m-\ell)\geq m+n-2$$ which is a contradiction; similarly, if $\ell\geq m-1$, we obtain that $g\geq k(m-1)\geq \frac{n}{2}(m-1)$, which is again a contradiction. Therefore $m-\ell\geq 2$.

\vskip 4pt

Note that the maximum value of $x+y$ for $(x,y)\in \mathbb Z^2$  satisfying the inequalities $g\geq xy$ and $x,y\geq 2$ is achieved for $(x,y)=\bigl(2,\lceil\frac{g-1}{2}\rceil\bigr)$,
with the exception of $(x,y)=(3,3)$ in the case $g=9$.  Therefore, since $n-k\geq 2$, we get
\[k+\ell \leq \Bigl\lceil\frac{g-1}{2}\Bigr \rceil+2\indent\mbox{ and }\indent (n+m)-(k+\ell) \leq \Bigl\lceil\frac{g-1}{2}\Bigr\rceil+2\]
which together imply $\frac{n+m}{2}\leq \lceil\frac{g-1}{2}\rceil+2$, contradicting the assumption that $\mbox{Cliff}(E)<\bigl \lfloor \frac{g-1}{2}\bigr \rfloor$.

\end{proof}

\vskip 5pt

We now show that on a generic curve of sufficiently high even genus, $\mbox{Cliff}_2(C)$ is only computed by vector bundles $E$ with $h^0(C,E)=4$.
For a partial result in this direction, see \cite{LN4}, Theorem 7.4. Note that for  a generic $K3$ section this result is not true. However, by specializing to curves on $K3$ surfaces,  we classify the rank $2$ vector bundles $E$ with $h^0(C,E)>4$ and $\mbox{Cliff}(E)=\mbox{Cliff}(C)$, then show that these bundles do not exist for a general curve $[C]\in \cM_g$.

\begin{prop}\label{equalclifford}  Let $E$ be a semistable rank $2$ torsion-free sheaf on $C\in|H|$ contributing to the Clifford index with $\Cliff(E)=\lfloor\frac{g-1}{2}\rfloor$.  If $h^0(C,E)>4$ and $g\neq 7, 9$, then $E\otimes K_C^\vee$ is the restriction of a Lazarsfeld--Mukai bundle of a rank $1$ torsion-free sheaf computing the Clifford index
$\mathrm{Cliff}(C)$. In particular, $\mathrm{det}(E)\cong K_C$.
\end{prop}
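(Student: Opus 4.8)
The plan is to push the destabilization analysis behind the proof of Theorem~\ref{k3mercat} to its boundary case $\Cliff(E)=\lfloor\frac{g-1}{2}\rfloor$, where the Bogomolov-type inequalities become equalities and rigidify the numerical type of every term of diagram~\eqref{diagram}, and then to reconstruct the rank~$1$ datum by the inverse Lazarsfeld--Mukai construction. Since $\Cliff(E)=\lfloor\frac{g-1}{2}\rfloor=\Cliff_2(C)$ by Theorem~\ref{k3mercat}, the sheaf $E$ computes $\Cliff_2(C)$ and is globally generated by the observation preceding \eqref{diagram}, so I may form $F=F_E$ with $v(F)=(n,-2H,m)$, where $n=h^0(C,E)$ and $m=h^1(C,E)$. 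Because $g\neq 7,9$ and $h^0(C,E)>4$, Proposition~\ref{equalcliff} guarantees that $F$ is not semistable, placing me in the situation of \eqref{diagram}: the first column is the Harder--Narasimhan filtration, the sheaves $G$ and $M$ are stable with $c_1(G)=c_1(M)=-H$, $v(G)=(k,-H,\ell)$ and $v(N)=(n-k,H,n-\ell)$, and $n>k\geq\frac n2\geq 2$. By Lemma~\ref{tf}(b) (here $g\neq 7$) the sheaf $N$ is torsion-free, so Lemma~\ref{bigrank} gives $\rk(N)=n-k\geq 2$.

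Next I run the numerical heart. Stability of $G$ and $M$ yields $g\geq k\ell$ and $g\geq(n-k)(m-\ell)$, and exactly as in the proof of Theorem~\ref{k3mercat} one finds $\ell\geq 2$ and $m-\ell\geq 2$. The maximization there bounds each of $k+\ell$ and $(n-k)+(m-\ell)$ by $\lceil\frac{g-1}{2}\rceil+2$; their sum is $n+m=2\lceil\frac{g-1}{2}\rceil+4$ precisely because $\Cliff(E)=\lfloor\frac{g-1}{2}\rfloor$, so both bounds are equalities and each of $(k,\ell)$ and $(n-k,m-\ell)$ is an extremal solution of $x+y=\max$ under $xy\leq g$, $x,y\geq 2$. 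For $g\neq 9$ the only extremal pairs are $(2,\lceil\frac{g-1}{2}\rceil)$ and its transpose; as $k\geq\frac n2>2$ this forces $(k,\ell)=(\lceil\frac{g-1}{2}\rceil,2)$, and the transposed option for $(n-k,m-\ell)$ is ruled out by $m\geq n$, leaving $(n-k,m-\ell)=(2,\lceil\frac{g-1}{2}\rceil)$. Hence $\rk(M)=\rk(N)=2=\rk(E)$ and $n=m=\lceil\frac{g-1}{2}\rceil+2$. The equality clause of Lemma~\ref{bigrank} now gives $M\cong N\otimes\O_X(-H)$ in codimension~$1$, so the bottom row of \eqref{diagram} is the restriction sequence $0\to N(-H)\to N\to N|_C\to 0$; thus $E\cong N|_C$, whence $\det(E)=\O_X(H)|_C=K_C$ by adjunction and $E\otimes K_C^\vee\cong M|_C$, where $M$ is stable with $H^0(X,M)=0$ and $v(M)=(2,-H,\lceil\frac{g-1}{2}\rceil)$.

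It remains to identify $M$ with the Lazarsfeld--Mukai bundle of a rank~$1$ sheaf, which is the substantive part of the argument and the step I expect to be the main obstacle. Note that $v(M)$ is exactly $v(F_{\tilde L})$ for a rank~$1$ sheaf $\tilde L$ with $h^0=2$ computing the Clifford index, so the identification is forced on Mukai vectors; the content is to realize it geometrically by the inverse construction. From $\chi(M)=\lceil\frac{g-1}{2}\rceil+2$ and $H^0(X,M)=0$ one gets $h^0(X,M^\vee)=h^2(X,M)\geq\lceil\frac{g-1}{2}\rceil+2\geq 2$, and two independent sections give $\phi\colon\O_X^2\to M^\vee$ whose determinant $s_1\wedge s_2\in H^0(X,\O_X(H))$ is nonzero: otherwise $s_1,s_2$ would generate a rank~$1$ subsheaf of $M^\vee$ with two sections, hence a line bundle $\O_X(dH)\hookrightarrow M^\vee$ with $d\geq 1$ and slope $\geq 2g-2>g-1=\mu(M^\vee)$, contradicting stability of $M^\vee$. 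Thus $\phi$ is injective with cokernel a rank~$1$ sheaf $L$ on the curve $C'\in|H|$ cut out by $s_1\wedge s_2$, and dualizing $0\to\O_X^2\to M^\vee\to L\to 0$ yields $0\to M^{\vee\vee}\to\O_X^2\to\tilde L\to 0$ with $\tilde L:=\mathcal{E}xt^1(L,\O_X)$ a rank~$1$ sheaf on $C'$, exhibiting $M^{\vee\vee}$ as $F_{\tilde L}$.

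The pleasant point is that Proposition~\ref{k3clifford} then closes the argument uniformly in the parity of $g$, and in particular dispatches what looks like the real danger, namely the possible non-reflexivity of $M$. Since $M^{\vee\vee}$ is stable of negative slope we have $H^0(X,M^{\vee\vee})=0$, so $h^0(C',\tilde L)\geq 2$; writing $T=M^{\vee\vee}/M$ for the finite-length quotient, a comparison of Mukai vectors gives $\Cliff(\tilde L)\leq\lfloor\frac{g-1}{2}\rfloor-\mathrm{length}(T)$, while Proposition~\ref{k3clifford} forces $\Cliff(\tilde L)\geq\lfloor\frac{g-1}{2}\rfloor$. Therefore $\mathrm{length}(T)=0$ and $h^0(C',\tilde L)=2$, so $M=M^{\vee\vee}=F_{\tilde L}$ is locally free and $\tilde L$ computes $\Cliff(C')=\lfloor\frac{g-1}{2}\rfloor$; consequently $E\otimes K_C^\vee\cong M|_C=F_{\tilde L}|_C$ is the restriction of the Lazarsfeld--Mukai bundle of a rank~$1$ sheaf computing the Clifford index, as claimed. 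The points I would treat with the most care are the nonvanishing of $s_1\wedge s_2$ (handled above by stability) and the fact that $C'$ may be reducible or nonreduced, so that $\tilde L$ must be read as a rank~$1$ torsion-free sheaf on a possibly singular member of $|H|$---precisely the generality in which Proposition~\ref{k3clifford} was stated.
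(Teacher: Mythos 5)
Your proof is correct and follows essentially the same route as the paper's: the boundary-case numerical analysis pins down $v(M)=\bigl(2,-H,\lceil\tfrac{g-1}{2}\rceil\bigr)$ and $v(N)=\bigl(2,H,\lceil\tfrac{g-1}{2}\rceil\bigr)$, Lemma \ref{bigrank} together with the equality of Mukai vectors gives $N\cong M(H)$ and hence $E\cong M_{|C}\otimes K_C$, and sections of $M^\vee$ combined with Proposition \ref{k3clifford} identify $M$ as the Lazarsfeld--Mukai bundle of a rank $1$ sheaf with $h^0=2$ computing the Clifford index. The only real divergence is that you dualize through $M^{\vee\vee}$ and let the length of $M^{\vee\vee}/M$ enter the Clifford estimate, thereby handling the possible non-reflexivity of $M$ explicitly---a point the paper's terser ``general plane in $H^2(X,M)$'' argument leaves implicit.
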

\begin{proof}  By Lemma \ref{tf}(b), $N$ is torsion-free and the computation of the previous proof still holds, showing that one of $k$ and $\ell$ is equal to $2$ and the other is $\lceil\frac{g-1}{2}\rceil$.  Likewise for $n-k$ and $m-\ell$.  Since $k\geq \frac{n}{2}>2$, we know $k=\lceil\frac{g-1}{2}\rceil$ and $\ell=2$.  We cannot have $m-\ell=2$ since $m\geq n> 4$, so $m-\ell=\lceil\frac{g-1}{2}\rceil$ and $n-k=2$.  We therefore have
\begin{align*}
v(M)&=\left(2,-H,\Bigl \lceil \frac{g-1}{2} \Bigr \rceil\right)\\
v(N)&=\left(2,H,\Bigl \lceil\frac{g-1}{2}\Bigr \rceil\right)
\end{align*}
The first thing to note is that $M$ has the same Mukai vector as the Lazarsfeld--Mukai bundle associated to a torsion-free sheaf $A$ computing the rank 1 Clifford index of $C$, and therefore it \emph{is} one.  Otherwise, a general plane in $H^2(X,M)$ would provide a section of $\O_X(H)$ with a line bundle $A$ of smaller Clifford index as the cokernel
\[0\longrightarrow M\longrightarrow \O^{\oplus 2}_X\longrightarrow A\longrightarrow  0.\]
Secondly, we have $v(M(H))=v(N)$, so by Lemma \ref{bigrank} we have that $N\cong M(H)$ and therefore $E\cong M(H)_{|C}\cong M_{|C}\otimes K_C$.
\end{proof}

\vskip 4pt

\begin{remark}  Let $\mathcal{P}_g$ be the stack of pairs $(X,C)$ where $X$ is a smooth $K3$ surface of genus $g$ and $C\subset X$ is a smooth curve in the polarization class.  Proposition \ref{equalclifford} implies a stronger form of a result of Arbarello, Bruno, and Sernesi \cite{ABS} carrying out Mukai's program in odd genus:  for $g=2s+1\geq 11$ and a general point $(X,C)\in\mathcal{P}_g$, the restriction map identifies the moduli space $M(v)$ of vector bundles $F$ on $X$ with $v(F)=v=(2,H,s)$ with the Brill--Noether stratum of semistable vector bundles $E$ on $C$ of degree $2g-2$ and $h^0(C,E)\geq s+2$.  Thus, we recover $X$ up to derived equivalence from a full Brill--Noether stratum on $C$.  It is shown in \cite{ABS} that $M(v)$ is a component of the same Brill--Noether stratum with fixed (canonical) determinant.
\end{remark}

\vskip 4pt

Proposition \ref{equalclifford} has several implications to the form of a rank 2 vector bundle $E$ on a general curve $C$ of genus $g$ computing $\mathrm{Cliff}_2(C)$.  If $h^0(C,E)>4$, then $\mu(E)=g-1$, and further, $h^0(C,E)=h^1(C,E)=2+\lceil\frac{g-1}{2}\rceil$.  As already pointed out, if $E$ has no subpencils, then
\[h^0(C,\det(E))\geq 2h^0(C,E)-3,\]
which is impossible if either $g$ is even, or $g$ is odd and $\det(E)\not\cong K_C$.

\begin{lemma}\label{even}  Let $E$ be a semistable rank $2$ vector bundle on a general curve $C$ with $h^0(C,E)>4$ and $\mathrm{Cliff}(E)=\bigl\lfloor \frac{g-1}{2}\bigr\rfloor$.  If  $g$ is even or $g$ is odd and $\det(E)\not\cong K_C$, then $E$ sits in a sequence
\[0\longrightarrow A\longrightarrow E\longrightarrow K_C(-A')\longrightarrow 0,\]
where both $A$ and $A'$ are line bundles computing $\mathrm{Cliff}(C)$.
\end{lemma}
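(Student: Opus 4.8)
The plan is to extract the desired destabilizing sequence from a subpencil of $E$ and then pin down the two line bundles by a section count. By the discussion immediately preceding the lemma, the hypotheses $h^0(C,E)>4$ and $\Cliff(E)=\lfloor\frac{g-1}{2}\rfloor$ force $\mu(E)=g-1$ (so $\deg\det(E)=2g-2$) together with $h^0(C,E)=h^1(C,E)=2+\lceil\frac{g-1}{2}\rceil$; moreover, in the two cases under consideration the alternative that $E$ carries no subpencil is excluded, since it would give $h^0(C,\det E)\geq 2h^0(C,E)-3$, which is impossible. Writing $c:=\Cliff(C)=\lfloor\frac{g-1}{2}\rfloor$, a direct check of both parities of $g$ shows $h^0(C,E)=g+1-c$. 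So I would begin by choosing a subpencil, saturating it inside $E$, and obtaining an exact sequence $0\to A\to E\to B\to 0$ with $A,B$ line bundles, $h^0(C,A)\geq 2$, and $\deg A+\deg B=2g-2$; semistability of $E$ gives $\deg A\leq g-1$, so $A$ contributes to the Clifford index. Since $\deg B\geq g-1$, the quotient $B$ itself need not contribute, which is why I would set $A':=K_C\otimes B^\vee$, giving $B=K_C(-A')$ and $\deg A'=\deg A\leq g-1$.

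Next I would run the section count. The sequence gives $h^0(C,E)\leq h^0(C,A)+h^0(C,B)$. Because $A$ contributes, $\Cliff(A)\geq c$ yields both $\deg A\geq c+2$ and $h^0(C,A)\leq\frac{\deg A+2-c}{2}$. For the quotient I would invoke the Clifford symmetry $\Cliff(B)=\Cliff(K_C-B)=\Cliff(A')$: once $A'$ is known to contribute, this gives $h^0(C,B)\leq\frac{\deg B+2-c}{2}$. Summing the two bounds and substituting $\deg A+\deg B=2g-2$ produces exactly $h^0(C,A)+h^0(C,B)\leq g+1-c=h^0(C,E)$. Every inequality must then be an equality; in particular $\Cliff(A)=\Cliff(A')=c$, so $A$ and $A'$ both compute $\Cliff(C)$, which is precisely the assertion.

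The step that needs genuine care — and the main obstacle — is justifying that $A'$ contributes to the Clifford index, i.e. that $h^0(C,A')\geq 2$, since only then is the symmetric bound on $h^0(C,B)$ legitimate. As $\deg A'=\deg A\leq g-1$ already holds, the only way $A'$ can fail to contribute is $h^0(C,A')\leq 1$. In that case Riemann-Roch for $B$ together with Serre duality gives $h^0(C,B)=\deg B+1-g+h^1(C,B)=\deg B+1-g+h^0(C,A')\leq\deg B+2-g$, and feeding this into the section count along with $\deg A\geq c+2$ forces $h^0(C,A)+h^0(C,B)<g+1-c=h^0(C,E)$, a contradiction. Hence $A'$ contributes and the equality analysis above goes through. (On a general curve one may additionally invoke \cite{Ba} to conclude $h^0(C,A)=h^0(C,A')=2$, although this refinement is not needed for the statement as phrased.)
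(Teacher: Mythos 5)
Your proposal is correct and follows essentially the same route as the paper: the same subpencil sequence $0\to A\to E\to K_C(-A')\to 0$ extracted from the discussion preceding the lemma, the same key bound (your summed section count $h^0(C,E)\leq h^0(C,A)+h^0(C,B)$ plus Clifford symmetry is exactly the paper's inequality $\Cliff(E)\geq\tfrac{1}{2}\bigl(\Cliff(A)+\Cliff(A')\bigr)$), and the same cohomological count to exclude $h^0(C,A')\leq 1$. The only difference is organizational — the paper argues by contradiction from $\Cliff(A)>\Cliff(E)$ using $h^1$-counts, while you first show $A'$ contributes and then force equalities in the $h^0$-count; these are dual formulations of one argument, and if anything yours spells out the contribution of $A'$ slightly more explicitly.
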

\begin{proof} By the above, $E$ has a subpencil, so there is an exact sequence
\[0\longrightarrow A\longrightarrow E\longrightarrow K_C(-A')\longrightarrow 0\]
and by the semistability of $E$, the line bundle $A$ contributes to the Clifford index of $C$.  Assume first that $\Cliff(A)> \Cliff(E)$.  Since
\[\Cliff(E)\geq\frac{1}{2}\Bigl(\Cliff(A)+\Cliff(A')\Bigr),\]
it follows that $\Cliff(A')<\Cliff(E)$, so $A'$ cannot contribute to the Clifford index of $C$.   We have $\deg(A')\leq g-1$, so this is only the case if $h^0(C, A')<2$.  But then
\[h^1(C,A)\geq h^1(C,E)-h^1(C,K_C(-A'))\geq h^1(C,E)-1=1+\Bigl\lceil\frac{g-1}{2}\Bigr\rceil\]
which contradicts $\Cliff(A)>\Cliff(E)$.  Thus,
\[\Cliff(A)=\Cliff(A')=\Cliff(E)=\Cliff(C).\]
\end{proof}

We are in a position to complete the proof of Theorem \ref{4sect}.

\begin{proof}[Proof of Theorem \ref{4sect}]
We fix a general curve $C$ of genus $g=2a\geq 10$, in particular $C$ does not lie on a $K3$ surface. Using \cite{V1} Proposition 4.2, we may assume that the multiplication map $$\mbox{Sym}^2 H^0\bigl(C, K_C(-A)\bigr)\rightarrow H^0\bigl(C, K_C^{\otimes 2}(-2A)\bigr)$$ is surjective for every $A\in W^1_{a+1}(C)$. If $E$ is a rank $2$ vector bundle with $h^0(C,E)>4$ computing $\mbox{Cliff}_2(C)$, then from Lemma \ref{even}, necessarily $h^0(C,E)=a+2$ and $E$ sits in an exact sequence
\begin{equation}\label{ext3}
0\longrightarrow A\longrightarrow E\longrightarrow K_C(-A')\longrightarrow 0,
\end{equation}
for pencils $A, A'\in W^1_{a+1}(C)$. The existence of $E$ implies that the multiplication map
$$\mu_{A,A'}:H^0(C,K_C(-A))\otimes H^0(C,K_C(-A'))\rightarrow H^0(K_C^{\otimes 2}(-A-A'))$$
is \emph{not} surjective. Using \cite{V1} \emph{loc.cit.}, the pencils $A$ and $A'$ must be distinct, in particular, by the Base Point Free Pencil Trick, $h^0(C, A\otimes A')\geq 4$.

\vskip 3pt

We now let $C$ specialize to a generic $K3$ section. From Proposition \ref{equalclifford}, the corresponding line bundles $A$ and $A'$
such that $\mu_{A,A'}$ is not surjective must be \emph{equal}, for $\mbox{det}(E)=K_C$. On the other hand, since $A$ and $A'$ are obtained as specialization of \emph{distinct}
line bundles on neighboring curves, $h^0(C,A^{\otimes 2})\geq 4$. Equivalently, the Petri map $H^0(C,A)\otimes H^0(C, K_C(-A))\rightarrow H^0(C,K_C)$ is not injective.
This contradicts \cite{La} for $C$ satisfies the Petri Theorem, hence no such vector bundle $E$ on a general curve can exist.
\end{proof}

\begin{remark} We certainly expect a result similar to Theorem \ref{4sect} for a general curve $C$ of odd genus $g=2a+1\geq 15$. Applying \cite{T}, there can exist no semistable rank $2$ vector bundles $E$ on $C$ with $\mbox{det}(E)\cong K_C$ and $h^0(C,E)=a+2$, for $a>6$. Thus we may assume that $\mbox{det}(E)\not\cong K_C$. Via Proposition \ref{equalclifford}, it suffices to show that for each pair of pencils $A,A'\in W^1_{a+2}(C)$, the multiplication map
$$\mu_{A,A'}:H^0(C,K_C(-A))\otimes H^0(C,K_C(-A'))\rightarrow H^0(K_C^{\otimes 2}(-A-A'))$$
is surjective. This we expect to hold, but so far the surjectivity of the map $\mu_{A,A'}$ has been established only for a general pair $(A,A')\in W^1_{a+2}(C)\times W^1_{a+2}(C)$, see
\cite{V1}.
\end{remark}

\section{Koszul divisors on $\mm_g$ and the Mercat Conjecture}

In this section we prove Theorem \ref{divisor}. We fix an odd genus $g:=2a+1\geq 11$ and denote by $\mathcal{H}$ the Hurwitz space parametrizing pairs $[C,A]$, where $C$ is a smooth curve of genus $g$ and $A\in W^1_{g-4}(C)$ is a pencil. It is well-known that $\mathcal{H}$ is irreducible of dimension
$$4g-13=\mbox{dim}(\cM_g)+\rho(g,1,g-4).$$ Let $\sigma:\mathcal{H}\rightarrow \cM_g$ be the forgetful morphism. If $A\in W^1_{g-4}(C)$, we set $L:=K_C\otimes A^{\vee}\in W^4_{g+2}(C)$ and consider the multiplication map of global sections:
$$\mu_L:\mbox{Sym}^2 H^0(C,L)\rightarrow H^0(C,L^{\otimes 2}).$$
We introduce the following degeneracy locus in the Hurwitz space $$\cZ:=\Bigl\{[C,A]\in \mathcal{H}:K_{1,1}(C,K_C(-A))\neq 0\Bigr\}.$$ Since for a Brill-Noether general curve $C$, we have $H^1(C, L^{\otimes 2})=0$ for each $L\in W^4_{g+2}(C)$, the expected dimension of $\cZ$ is equal to
$$\mbox{dim}(\mathcal{H})-\Bigl(\chi(C,L^{\otimes 2})-\mbox{dim } \mbox{Sym}^2 H^0(C,L)+1 \Bigr)=\mbox{dim}(\cM_g)-1.$$
We set $\mathfrak{Kosz}_g:=\sigma_*(\cZ)$ and we expect $\mathfrak{Kosz}_g$ to be a divisor on $\cM_g$. We first confirm that $\cZ$ carries a component whose image under $\sigma$ has codimension $1$ in $\cM_g$.

\vskip 5pt

\begin{prop}\label{cod1}
The degeneracy locus $\cZ$ has at least one irreducible component $\cU$ of dimension $3g-4$, mapping generically finitely onto a divisor of $\cM_g$.
\end{prop}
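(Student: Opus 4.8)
The plan is to exhibit at least one component of $\cZ$ of the expected dimension $3g-4 = \dim(\cM_g)-1$ by specializing to curves lying on a $K3$ surface, where the Koszul condition is controlled by the existence of the stable rank $2$ vector bundles produced in Section 2. Fix a smooth $K3$ surface $X$ with $\Pic(X)=\Z\cdot H$ and $H^2=2g-2$, and consider the Mukai vector $v=(2,H,s)$ where $g=2a+1$, $s=a=\frac{g-1}{2}$. By the results of the previous section, the moduli space $M_H(v)$ of stable rank $2$ sheaves $F$ on $X$ is smooth of dimension $v^2+2=2$, and by Proposition \ref{equalclifford} (read in reverse, via the correspondence $E\cong F_{|C}\otimes K_C$, $\det(E)\cong K_C$) each such $F$ restricts to a stable rank $2$ bundle $E=F_{|C}\otimes K_C$ on a curve $C\in|H|$ with $h^0(C,E)=a+2>4$ and $\Cliff(E)=\lfloor\frac{g-1}{2}\rfloor$. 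Via the equivalence \eqref{equiv1}, such a bundle $E$ with $h^0\geq 4$ and $\det(E)=K_C\cong L^{\otimes}$-twist yields a nonzero class $q\in K_{1,1}(C,L)$ for $L=K_C(-A)\in W^4_{g+2}(C)$, so that $[C,A]\in\cZ$.

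Concretely, I would set up the incidence variety over the moduli stack $\PP_g$ of pairs $(X,C)$ with $C\in|H|$. First I would count dimensions on the $K3$ side: the family of such pairs $(X,C)$ has dimension $19+g$ (namely $19$ moduli of polarized $K3$ surfaces plus $g=\dim|H|$), while a general fiber of the forgetful map to $\cM_g$ — the set of $K3$ surfaces containing a fixed general curve of genus $g$ lying on \emph{some} $K3$ — has the expected dimension, so the image in $\cM_g$ of the $K3$-section locus is the Noether--Lefschetz-type divisor of expected dimension $3g-3$. Over each such pair, the surface $M_H(v)$ is a $2$-dimensional family of bundles $E$, each giving a point of $\cZ$ lying over $[C]$; but the choice of pencil $A$ with $L=K_C(-A)=\det(E)\otimes\text{(fixed)}$ is rigidified once $\det(E)=K_C$ is fixed, so I would check that the resulting locus in $\cZ$ over the $K3$-section divisor has dimension exactly $\dim(\cM_g)-1=3g-4$. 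Taking an irreducible component $\cU$ of $\cZ$ containing this locus and dominating it gives a component of the claimed dimension.

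The main step — and the main obstacle — is to show that $\sigma|_{\cU}$ is \emph{generically finite} onto its image, equivalently that a general point of $\cU$ does \emph{not} lie over a locus of $K3$ sections of positive-dimensional fibers, and more seriously that $\cU$ is not contained in some larger component of $\cZ$ that collapses under $\sigma$. Here I would argue that for a general $[C,A]\in\cU$ the curve $C$ lies on \emph{only finitely many} $K3$ surfaces (indeed the general genus-$g$ curve on a $K3$ with $\Pic=\Z H$ lies on a unique such $X$ up to the finite ambiguity coming from $M_H(v)$ being a $K3$ itself), so that the fibers of $\sigma$ over a general point of the image are finite. Because the expected dimension of $\cZ$ is $3g-4$, it suffices to produce \emph{one} component of exactly this dimension mapping with finite general fiber; the upper bound $\dim\cZ\leq 3g-4$ on the relevant component follows from the transversality/expected-dimension count for the degeneracy locus cut out by $\mu_L$, together with $H^1(C,L^{\otimes 2})=0$ for Brill--Noether general $C$.

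The delicate point I expect to fight with is separating the genuine divisorial component from the a priori possibility that all of $\cZ$ collapses: this is exactly where the hypothesis $\Pic(X)=\Z\cdot H$ and the stability of $F_E$ (Propositions \ref{stable}, \ref{equalclifford}) are indispensable, since they guarantee that the bundle $E$ reconstructed from $q\in K_{1,1}(C,L)$ is stable and uniquely determined, pinning down $X$ up to derived equivalence as in the Remark following Proposition \ref{equalclifford}. I would therefore structure the proof so that the finiteness of $\sigma|_{\cU}$ reduces to the finiteness of the map from the universal $M_H(v)$ to the Brill--Noether stratum on $C$, which is an isomorphism onto a component by that Remark.
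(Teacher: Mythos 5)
Your proposal is built on an idea that runs head-on into the paper's own main theorem, so it cannot be repaired: it tries to manufacture points of $\cZ$ from curves on a $K3$ surface with $\Pic(X)=\Z\cdot H$, but Theorem \ref{k3mercat} says exactly that no such points exist. Concretely, if $[C,A]\in\cZ$ with $C\in|H|$, then via \eqref{equiv1} one gets a rank $2$ bundle $E$ with $\det(E)=K_C(-A)\in W^4_{g+2}(C)$, $h^0(C,E)\geq 4$, hence $\Cliff(E)=a-\frac{1}{2}<a=\Cliff(C)$, and $E$ is checked to be stable; this contradicts $\Cliff_2(C)=\Cliff(C)$. Indeed, the disjointness $j(\P^1)\cap\overline{\mathfrak{Kosz}}_g=\emptyset$ is precisely how the slope is computed in Theorem \ref{kosz}, so generic $K3$ sections are the one place where $\cZ$ has \emph{no} points. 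The specific slip in your first paragraph is the conflation of two different line bundles: Proposition \ref{equalclifford} produces bundles with $\det(E)\cong K_C$, of degree $2g-2$, and $\Cliff(E)=\lfloor\frac{g-1}{2}\rfloor$ (no Mercat violation), whereas membership in $\cZ$ requires, through \eqref{equiv1}, a bundle whose determinant is $L=K_C(-A)$ of degree $g+2$; the equivalence \eqref{equiv1} ties the quadric to $K_{1,1}\bigl(C,\det(E)\bigr)$, and the non-vanishing of $K_{1,1}(C,K_C)$ is automatic for $g\geq 4$ and says nothing about $K_{1,1}\bigl(C,K_C(-A)\bigr)$. There is also an unfixable dimension obstruction: for odd $g\geq 13$ the locus of curves lying on any $K3$ surface has dimension $19+g<3g-4$, so no construction fibered over $K3$ sections can dominate a divisor in $\cM_g$; for $g=11$ that locus is dense in $\cM_{11}$, and the curves of $\mathfrak{Kosz}_{11}$ lie on Noether--Lefschetz \emph{special} $K3$ surfaces, not on generic ones.

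The paper's actual proof has nothing to do with $K3$ surfaces. A point of $\cZ$ is the same thing as a curve $C$ of genus $g=2a+1$ and degree $2a+3=g+2$ lying on a quadric $Q\subset\P^4$, the quadric being the nonzero Koszul class for $L=\mathcal{O}_C(1)\in W^4_{g+2}(C)$. One then works with the Hilbert scheme $\mathrm{Hilb}_Q$ and proves, by induction on $a$, the existence of a smooth such curve with $H^1(C,N_{C/Q})=0$ and $h^1(C,T_{Q|C})=1$: the inductive step attaches to $C$ a $3$-secant conic $\gamma=Q\cap\Lambda$ through three general points of $C$ and smooths the nodal curve $C\cup\gamma$ inside $Q$, adapting Sernesi's smoothing technique \cite{S}, while the base case $g=11$ comes from \cite{FO2}. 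Since $\chi(N_{C/Q})=3g-4+\dim\mathrm{Aut}(Q)$, these two cohomological conditions say that $\mathrm{Hilb}_Q/\mathrm{Aut}(Q)$ is smooth of dimension $3g-4$ at $[C]$ and that the differential of the map to $\cM_g$ (identified via Kodaira--Spencer theory with the coboundary $H^0(C,N_{C/Q})\rightarrow H^1(C,T_C)$) is injective there, which is exactly the assertion of the Proposition. If you want a correct argument, this deformation-theoretic route is the one to follow; any route through generic $K3$ surfaces is ruled out by the very theorem this Proposition is meant to complement.
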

\begin{proof}
Suppose $Q\subset \pp^4$ is a smooth quadric and denote by $\mbox{Hilb}_Q$ the Hilbert scheme of curves $C\subset Q$  of genus $g:=2a+1$ and degree $2a+3$. By induction on $a\geq 5$, we shall construct a smooth curve $[C\hookrightarrow Q]$ corresponding to a smooth point of $\mbox{Hilb}_Q$, such that the differential at the point $[C\hookrightarrow Q ]$ of the projection map $\pi:\mbox{Hilb}_Q\dashrightarrow \cM_g$ has rank equal to $3g-4$.
Passing to cohomology in the short exact sequence
$$0\longrightarrow T_C\longrightarrow T_{Q|C}\longrightarrow N_{C/Q}\longrightarrow 0,$$
these requirements will be satisfied once we construct $C\subset Q$ such that
\begin{equation}\label{conditions}
H^1(C,N_{C/Q})=0 \ \ \mbox{ and } \ \ h^1(C, T_{Q|C})=1,
\end{equation}
where $T_{Q|C}:=T_Q\otimes \mathcal{O}_C$. As usual, via Kodaira-Spencer theory, the differential $(d\pi)_{[C\hookrightarrow Q]}$ is identified with the coboundary map $\delta:H^0(C, N_{C/Q})\rightarrow H^1(C,T_C)$.
From the exact sequence
\begin{equation}\label{normquad}
0\longrightarrow N_{C/Q}\longrightarrow N_{C/\pp^4}\longrightarrow L^{\otimes 2}\longrightarrow 0,
\end{equation}
we compute $\mbox{deg}(N_{C/Q})=10a+9$, hence $\chi(N_{C/Q})=6a+9=3g-4+\mbox{dim } \mbox{Aut}(Q)$. If both conditions
(\ref{conditions}) are satisfied, the quotient space $\mbox{Hilb}_Q/\mbox{Aut}(Q)$ is smooth of dimension $3g-4$ at the point corresponding to $C\subset Q$ and the differential of the forgetful map to $\cM_g$ is injective.

\vskip 5pt

Assume, we have constructed $C\subset Q$ of genus $2a+1$ and degree $2a+3$ satisfying (\ref{conditions}). We pick general points $x,y,z\in C$, then set $\Lambda:=\langle x,y,z\rangle\ \subset \pp^4$ and $\gamma:=Q\cap \Lambda$. Thus $\gamma$ is a $3$-secant smooth conic to $C$ and we set $X:=C\cup \gamma$. The nodal curve $X\subset Q$ has arithmetic genus $p_a(X)=g(C)+2=2a+3$ and degree $2a+5$. Adapting the arguments from \cite{S} to the case of curves lying on a quadric, in order to show that $X$ is smoothable inside $Q$ to a curve $C'\subset Q$ satisfying $H^1(N_{C'/Q})=0$, it suffices to prove that
\begin{equation}\label{attach}
H^1(\gamma, N_{\gamma/Q})=0 \ \mbox{ and } \ H^1\bigl(\gamma, N_{\gamma/Q}(-x-y-z)\bigr)=0.
\end{equation}
Clearly $N_{\gamma/Q}=\mathcal{O}_{\gamma}(1)^{\oplus 2}$ and since $\mbox{deg}(\mathcal{O}_{\gamma}(1))=2$, both vanishings (\ref{attach}) are satisfied.

\vskip 3pt

In order to verify that $H^1(X, T_{Q|X})=0$, we write down the following exact sequence
$$ H^0(T_{Q|C})\oplus H^0(T_{Q|\gamma})\longrightarrow H^0(T_{Q|x+y+z})\longrightarrow H^1(T_{Q|X})\longrightarrow H^1(T_{Q|C})\oplus H^1(T_{Q|\gamma})\longrightarrow 0.$$
Since $T_{Q|\gamma}=\mathcal{O}_{\gamma}(1)^{\oplus 3}$, it follows that $H^1(T_{Q|\gamma})=0$ and the map $H^0(T_{Q|\gamma})\rightarrow H^0(T_{Q|x+y+z})$ is surjective. Therefore $H^1(X, T_{X|Q})\cong H^1(C, T_{Q|C})$, which establishes (\ref{conditions}) for the stable curve $X$ and completes the induction step. The base case $a=5$ follows from \cite{FO2} Section 5, for in genus $11$, the locus $\mathfrak{Kosz}_{11}$ has a concrete Noether-Lefschetz interpretation in terms of the (unique) $K3$ surface containing a general curve of genus $11$.
\end{proof}

\vskip 3pt

We now proceed and complete the proof of Theorem \ref{divisor}. To that end, we fix as before a $K3$ surface $(X,H)$ with $H^2=2g-2$ and $\mbox{Pic}(X)=\mathbb Z\cdot H$ and denote by $j:\pp^1\rightarrow \mm_g$ the rational curve induced by a Lefschetz pencil of curves in the linear system $|H|$.  The intersection numbers of $j(\pp^1)$ with the standard generators of $\mbox{Pic}(\mm_g)$ are well-known:
$$j^*(\lambda)=g+1, \ \ j^*(\delta_0)=6(g+3), \ \mbox{ and } \  j^*(\delta_i)=0, \ \mbox{ for } i=1, \ldots, \bigl\lfloor \frac{g}{2}\bigr\rfloor.$$
Note that all singular curves in the Leschetz pencil $j(\pp^1)$ are irreducible with a single node.

\begin{theorem}\label{kosz}
For each odd genus $g\geq 11$, the locus $\mathfrak{Kosz}_g$ is an effective divisor on $\cM_g$ different from the Hurwitz divisor. The slope of its closure in $\mm_g$ is equal to
$$s(\overline{\mathfrak{Kosz}}_g)=6+\frac{12}{g+1}.$$
\end{theorem}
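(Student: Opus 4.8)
The plan is to derive the entire statement from Theorem \ref{k3mercat}, Proposition \ref{cod1}, and the known intersection numbers of a Lefschetz pencil. First, Proposition \ref{cod1} already exhibits an irreducible component $\cU\subseteq\cZ$ of dimension $3g-4$ mapping generically finitely onto a codimension-one locus of $\cM_g$; since $\mathfrak{Kosz}_g=\sigma_*(\cZ)$, this shows $\mathfrak{Kosz}_g$ has a divisorial component, so its closure $\overline{\mathfrak{Kosz}}_g\subset\mm_g$ is an effective divisor. As it is the closure of an interior locus it contains no boundary divisor in its support, and we may therefore write $[\overline{\mathfrak{Kosz}}_g]=a\lambda-\sum_{i=0}^{\lfloor g/2\rfloor}b_i\delta_i$ with slope $a/b_0$.

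The heart of the argument is to prove that the Lefschetz pencil $j:\pp^1\to\mm_g$ is disjoint from $\overline{\mathfrak{Kosz}}_g$. For a smooth fiber $C\in|H|$ this is immediate: as recorded in the introduction, any $[C]\in\mathfrak{Kosz}_g$ carries $L\in W^4_{g+2}(C)$ with $K_{1,1}(C,L)\neq 0$, equivalently a globally generated rank $2$ bundle $E$ with $\det E=L$ and $h^0(C,E)=4$, so that $\Cliff(E)=\frac{g-2}{2}<\lfloor\frac{g-1}{2}\rfloor$ and hence $\Cliff_2(C)<\Cliff(C)$, contradicting Theorem \ref{k3mercat}. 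In particular $j(\pp^1)\not\subset\overline{\mathfrak{Kosz}}_g$.

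The essential point, and the reason Theorem \ref{k3mercat} was established for possibly nodal curves, is to rule out the singular fibers of the pencil, which by hypothesis are irreducible $1$-nodal curves $C_0\in|H|$ and hence define points of $\Delta_0$. Suppose $[C_0]\in\overline{\mathfrak{Kosz}}_g$; then $[C_0]$ is a limit of points $[C_n]\in\mathfrak{Kosz}_g$, each carrying a semistable rank $2$ bundle $E_n$ with $\mu(E_n)=\frac{g+2}{2}$, $h^0(C_n,E_n)=4$ and $\Cliff(E_n)=\frac{g-2}{2}$. After a base change, the properness of the relative moduli space of semistable torsion-free sheaves over a one-parameter smoothing realizing this degeneration lets a subsequence of the $E_n$ specialize to a semistable torsion-free sheaf $E_0$ on $C_0$ with $\mu(E_0)=\frac{g+2}{2}\leq g-1$ and, by semicontinuity, $h^0(C_0,E_0)\geq 4$; thus $E_0$ contributes to the second Clifford index and $\Cliff(E_0)\leq\frac{g-2}{2}<\lfloor\frac{g-1}{2}\rfloor=\Cliff(C_0)$. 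This forces $\Cliff_2(C_0)<\Cliff(C_0)$, contradicting Theorem \ref{k3mercat} applied to $C_0$. Hence no fiber of the pencil, smooth or nodal, lies on $\overline{\mathfrak{Kosz}}_g$, i.e. $j(\pp^1)\cap\overline{\mathfrak{Kosz}}_g=\emptyset$.

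The slope now drops out: disjointness gives $j^*[\overline{\mathfrak{Kosz}}_g]=0$, and using $j^*\lambda=g+1$, $j^*\delta_0=6(g+3)$ and $j^*\delta_i=0$ for $i\geq 1$ this reads $a(g+1)-6(g+3)b_0=0$, whence
$$s(\overline{\mathfrak{Kosz}}_g)=\frac{a}{b_0}=\frac{6(g+3)}{g+1}=6+\frac{12}{g+1},$$
matching Harris--Mumford's value for $\overline{\mathfrak{Hur}}_g$. Finally, to see the two divisors are nonetheless distinct I would check that a general curve in the component $\cU$ of Proposition \ref{cod1}, a curve of degree $2a+3$ on a smooth quadric in $\pp^4$, has maximal gonality $a+2$, so that it does not lie on $\mathfrak{Hur}_g$; since both are irreducible divisors, they cannot coincide. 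The main obstacle is the boundary analysis of the third paragraph: controlling the flat limit of the destabilizing bundles $E_n$ on the nodal curve $C_0$ and confirming the limit still contributes to $\Cliff_2(C_0)$, which is exactly what the nodal case of Theorem \ref{k3mercat} is designed to defeat.
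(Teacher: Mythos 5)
Your overall strategy coincides with the paper's: divisoriality from Proposition \ref{cod1}, disjointness of the Lefschetz pencil $j(\pp^1)$ from $\overline{\mathfrak{Kosz}}_g$, and the slope read off as $j^*(\delta_0)/j^*(\lambda)$. However, there is a genuine gap at the heart of your disjointness argument, for smooth fibers already: you never verify that the bundle $E$ produced from the Koszul class is (semi)stable. By definition, $\Cliff_2(C)$ is a minimum over \emph{semistable} bundles contributing to it, so exhibiting a globally generated rank $2$ bundle $E$ with $h^0(C,E)=4$, $\det(E)=L$ and $\Cliff(E)=\frac{g-2}{2}$ does not by itself give $\Cliff_2(C)<\Cliff(C)$: if $E$ were unstable, Theorem \ref{k3mercat} would not be contradicted. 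The paper closes exactly this hole. Writing $g=2a+1$, for a curve $C$ in the pencil, which is Brill--Noether general by \cite{La}, a destabilizing sub-line bundle $B\subset E$ would satisfy $\deg(B)\geq a+2$; since $E$ has no subpencils, $h^0(C,B)\leq 1$, hence the quotient $L(-B)$ would be a linear series $\mathfrak g^2_d$ with $d\leq a+1$, contradicting $\rho(2a+1,2,a+1)<0$. Without this step (or a substitute), your second paragraph produces no contradiction.

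The same omission undermines your boundary analysis more seriously. To invoke properness of the relative moduli space of semistable sheaves you assert that the bundles $E_n$ on the approaching curves $[C_n]\in\mathfrak{Kosz}_g$ are semistable. But those $C_n$ are arbitrary curves in a divisor of $\cM_g$, hence not Brill--Noether general, so the stability argument above is unavailable for them, and the introduction's remark only concerns a general point of $\mathfrak{Kosz}_g$, not a sequence converging to a prescribed nodal curve. The paper avoids taking limits of the bundles altogether: it takes the limit of the pairs $(L_n,q_n)$ instead --- the rank one torsion-free sheaf $L_0$ of degree $g+2$ with $h^0(C_0,L_0)\geq 5$ exists by properness of the relative compactified Jacobian, and the Koszul class survives by semicontinuity --- and then runs the construction of $E$ and the stability argument directly on the nodal fiber $C_0$, which \emph{is} Brill--Noether general by Proposition \ref{k3clifford}. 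This is precisely why Theorem \ref{k3mercat} and Proposition \ref{k3clifford} were established for nodal curves and torsion-free sheaves. Finally, your proof that $\mathfrak{Kosz}_g\neq\mathfrak{Hur}_g$ (maximal gonality of the degree $2a+3$ curves on a quadric in $\pp^4$) is left as an unverified check and is not obviously accessible from the construction in Proposition \ref{cod1}; the paper instead quotes \cite{FO2}, Theorem 1.1, which exhibits a curve $[C]\in\mathfrak{Kosz}_g$ with $\Cliff(C)=a$.
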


\begin{proof}
Keeping the notation above, we claim that $j(\pp^1)\cap \overline{\mathfrak{Kosz}}_g=\emptyset$. Granting this for a moment, we obtain that $\mathfrak{Kosz}_g$ is a proper subvariety of $\cM_g$. Since from Proposition \ref{cod1}, the locus $\mathfrak{Kosz}_g$ has at least one divisorial component, necessarily $$s(\overline{\mathfrak{Kosz}}_g)=\frac{j^*(\delta_0)}{j^*(\lambda)}=6+\frac{12}{g+1}.$$

We establish the claim.  Assume there exists a curve $[C]\in j(\pp^1)$ and $L\in W^4_{2a+3}(C)$, such that $\mu_L$ is not injective. Then using (\ref{equiv1}), we construct a globally generated rank $2$ vector bundle $E$ on $C$ with $\mbox{det}(E)=L$ and $h^0(C,E)\geq 4$. We compute $\mbox{Cliff}(E)=a-\frac{1}{2}$. On the other hand, by \cite{La}, the curve $C$ satisfies the Brill-Noether Theorem, in particular $\mbox{Cliff}(C)=a$. We reach a contradiction with Theorem \ref{k3mercat}, by observing that $E$ must be stable. Indeed, assume that there exists a destabilizing exact sequence
$$0\longrightarrow B\longrightarrow E\longrightarrow L(-B)\longrightarrow 0.$$
On one hand, $\mbox{deg}(B)\geq a+2$, so $\mbox{deg}(L(-B))\leq a+1$. Furthermore, $E$ has no subpencils, hence $h^0(C,B)\leq 1$ and accordingly $h^0(C, L(-B))\geq 3$. On the other hand, $\rho(2a+1,2,a+1)<0$, which is a contradiction, for $C$ is Brill-Noether general.

Finally, in \cite{FO2} Theorem 1.1, using special $K3$ surfaces an example of a curve $[C]\in \mathfrak{Kosz}_g$ with $\mbox{Cliff}(C)=a$ is produced, hence $\mathfrak{Kosz}_g\nsubseteq \mathfrak{Hur}_g$, which finishes the proof.
\end{proof}



\subsection{The Hurwitz divisor viewed as a special Koszul divisor.}
Comparing the syzygetic description of the divisor $\mathfrak{Kosz}_g$ given in Theorem \ref{divisor}, it is natural to ask, whether the remaining component of the locus of curves $[C]\in \cM_g$ such that $\mbox{Cliff}_2(C)<\frac{g-1}{2}$ possesses a similar Koszul-theoretic incarnation. The answer is affirmative:

\begin{theorem}\label{bn}
For odd $g\geq 3$, the Hurwitz divisor $\mathfrak{Hur}_g$ is set-theoretically equal to the locus
$$\Bigl\{[C]\in \cM_g: \exists L\in W^2_{g+1}(C) \ \mbox{ } such \mbox{ }\  that \ \ K_{1,1}(C,L)\neq 0\Bigr \}.$$
\end{theorem}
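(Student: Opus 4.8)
The plan is to prove the set-theoretic equality by a double inclusion, exploiting the equivalence \eqref{equiv1} specialized to the case where the rank-$6$ quadric degenerates to rank $\leq 4$. The key numerical observation is that for $L\in W^2_{g+1}(C)$ we have $h^0(C,L)=3$, so $\mbox{Sym}^2 H^0(C,L)$ has dimension $6$ while $h^0(C,L^{\otimes 2})=h^0(C,L^{\otimes 2})$; a Riemann--Roch computation shows $\chi(C,L^{\otimes 2})=2(g+1)+1-g+1=g+3$ for the generic member, so that for a Brill--Noether general curve the map $\mu_L\colon \mbox{Sym}^2 H^0(C,L)\rightarrow H^0(C,L^{\otimes 2})$ is between a $6$-dimensional and a larger space, and $K_{1,1}(C,L)\neq 0$ forces a genuine syzygy. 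I would first treat the easier inclusion: if $[C]\in\mathfrak{Hur}_g$ carries a pencil $A\in\mathfrak{g}^1_{(g+1)/2}$, then $L:=A^{\otimes 2}$ (or a suitable line bundle built from $A$) lands in $W^2_{g+1}(C)$ and the image curve $\varphi_L(C)\subset\pp^2$ is a plane curve, hence automatically lies on no quadric only if nondegenerate---so I must instead arrange $L$ with $h^0=3$ whose image spans $\pp^2$, where every curve in a plane trivially satisfies the quadric condition once the genus forces $K_{1,1}\neq 0$ by dimension count.

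For the reverse inclusion, which I expect to be the heart of the argument, suppose $L\in W^2_{g+1}(C)$ with $K_{1,1}(C,L)\neq 0$. The nonzero quadric $q$ has rank at most $3$ since $\dim H^0(C,L)=3$. The equivalence \eqref{equiv1} applies to pencils rather than the four-section setup, but the relevant principle is the same: a low-rank quadric through the image of a curve $\varphi_L\colon C\rightarrow \pp\bigl(H^0(C,L)^\vee\bigr)$ encodes a pencil. Concretely, a rank $\leq 3$ quadric in $\pp^2$ containing $\varphi_L(C)$ means the image of $C$ lies on a conic; combined with $\deg L=g+1$ and $h^0(C,L)=3$, this produces via projection or via the base-point-free pencil trick a line bundle $A$ with $h^0(C,A)\geq 2$ and $\deg A=\frac{g+1}{2}$, exhibiting the $\mathfrak{g}^1_{(g+1)/2}$ that places $[C]$ in $\mathfrak{Hur}_g$. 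The main obstacle will be handling degenerate configurations: if $\varphi_L$ fails to be birational onto its image, or if $L$ is not base-point-free, then the passage from the syzygy $q$ to the genuine pencil $A$ breaks the clean factorization, and these cases must be excluded or analyzed separately using the Brill--Noether generality guaranteed by Proposition \ref{k3clifford} for the $K3$ specialization.

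Since $\mathfrak{Hur}_g$ and the Koszul locus are both known a priori to be divisorial (the former by Harris--Mumford, the latter being cut out by the degeneracy condition on $\mu_L$), I would finally note that it suffices to check the equality on a general point of each, reducing the set-theoretic claim to the generic geometric statement rather than requiring a scheme-theoretic comparison. The argument parallels the proof of Theorem \ref{kosz}: one specializes $C$ to a section of a generic $K3$ surface $(X,H)$ with $\mbox{Pic}(X)=\Z\cdot H$, where Proposition \ref{k3clifford} guarantees that any torsion-free sheaf computing $\Cliff(C)$ has $h^0=2$, pinning down the pencil structure and forcing the quadric $q$ to arise from precisely such a pencil. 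The hard part throughout is ensuring that the correspondence between the nonvanishing Koszul class and the half-canonical pencil is a genuine bijection on the level of loci, not merely a one-way implication, which requires verifying that the pencil produced has exactly the degree $\frac{g+1}{2}$ rather than something smaller that would land $[C]$ in a deeper Brill--Noether stratum.
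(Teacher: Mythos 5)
Your second paragraph (the inclusion of the Koszul locus into $\mathfrak{Hur}_g$) is essentially the paper's argument and is sound: a nonzero class in $K_{1,1}(C,L)$ with $h^0(C,L)=3$ is a conic through the image of $\varphi_L$, the conic is forced to be smooth (an irreducible curve spanning $\pp^2$ cannot lie on a line), and the induced factorization through the conic $\cong\pp^1$ produces a pencil $A$ with $h^0(C,A)\geq 2$ and $H^0\bigl(C,L\otimes A^{\otimes(-2)}\bigr)\neq 0$, hence $\deg A\leq \frac{g+1}{2}$. Your closing worry about the pencil having degree strictly less than $\frac{g+1}{2}$ is harmless: such a pencil plus base points is still a $\mathfrak g^1_{\frac{g+1}{2}}$, so $[C]\in\mathfrak{Hur}_g$ in any case (the paper instead dodges this by working outside codimension $2$ in $\cM_g$).

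The genuine gap is in the forward inclusion $\mathfrak{Hur}_g\subseteq$ Koszul locus. You correctly choose $L:=A^{\otimes 2}\in W^2_{g+1}(C)$, but your justification that $K_{1,1}(C,L)\neq 0$ --- that ``the genus forces $K_{1,1}\neq 0$ by dimension count'' --- is false. The multiplication map goes from the $6$-dimensional space $\mathrm{Sym}^2 H^0(C,L)$ to $H^0(C,L^{\otimes 2})$, which has dimension $g+3\geq 6$ (as $\deg L^{\otimes 2}=2g+2>2g-2$ gives $h^1=0$), so no kernel is forced by dimensions. Indeed, if your claim were correct the theorem itself would fail: since $\rho(g,2,g+1)=g-3\geq 0$, \emph{every} curve of genus $g\geq 3$ carries some $L\in W^2_{g+1}(C)$, and the right-hand locus would then be all of $\cM_g$ rather than a divisor. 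What actually produces the syzygy is not a count but the factorization of $\varphi_{A^{\otimes 2}}$ through the Veronese conic: with $(s_1,s_2)$ a basis of $H^0(C,A)$ and $\mu:\mathrm{Sym}^2H^0(C,A)\rightarrow H^0(C,A^{\otimes 2})$ the multiplication map, the element $\mu(s_1^2)\cdot\mu(s_2^2)-\bigl(\mu(s_1s_2)\bigr)^2\in \mathrm{Sym}^2H^0(C,L)$ maps to zero in $H^0(C,L^{\otimes 2})$, and it is nonzero because $s_1^2, s_1s_2, s_2^2$ are linearly independent sections of $L$ (base-point-free pencil trick, valid at a general point of $\mathfrak{Hur}_g$ where $A$ is base point free); closedness of the Koszul locus then propagates the containment from the general point to all of the irreducible divisor $\mathfrak{Hur}_g$. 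Finally, your proposed $K3$ specialization in the last paragraph cannot serve this direction: by \cite{La} a section of a generic $K3$ surface is Brill--Noether general, hence lies \emph{outside} $\mathfrak{Hur}_g$, and a curve in the closed locus $\mathfrak{Hur}_g$ can never specialize to such a section --- the paper's proof of this statement is entirely elementary and uses no $K3$ input, in contrast with Theorems \ref{mercat2} and \ref{divisor}.
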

\begin{proof}
Assume there exists $L\in W^2_{g+1}(C)$ such that $K_{1,1}(C,L)\neq 0$. We consider the induced rational map $\varphi_L:C\dashrightarrow \pp^2$.
The image $\varphi_L(C)$ lies on a conic, therefore there exists a line bundle $A\in \mbox{Pic}(C)$ with $h^0(C,A)\geq 2$, such that $H^0(C, L\otimes A^{\otimes (-2)})\neq 0$.
Since $C$ can be assumed to lie outside any codimension $2$ locus of $\cM_g$, clearly $C$ has gonality at least $\frac{g+1}{2}$, thus $L=A^{\otimes 2}$, and accordingly $[C]\in \mathfrak{Hur}_g$.

\vskip 4pt

Conversely, if $C$ is a general point of the (irreducible) divisor $\mathfrak{Hur}_g$, then $C$ has as a unique pencil $A\in W^1_{\frac{g+1}{2}}(C)$, with $h^0(C,A)=2$ and such that the multiplication map $$\mu:\mbox{Sym}^2 H^0(C,A)\rightarrow H^0(C,A^{\otimes 2})$$ is an isomorphism. We set $L:=A^{\otimes 2}\in W^2_{g+1}(C)$ and let $(s_1, s_2)$ be a basis of $H^0(C,A)$. Then $0\neq \bigl(\mu(s_1)\bigr)^2\cdot \bigl(\mu(s_2)\bigr)^2-\bigl(\mu(s_1\cdot s_2)\bigr)^2\in K_{1,1}(C,L)$.
\end{proof}

\vskip 4pt

\begin{remark} Theorem \ref{bn} can be regarded as a generalization of M. Noether's Theorem on the projective normality of non-hyperelliptic canonical curves. Indeed, since the only linear system of type $\mathfrak g^2_4$ on a curve of genus $3$ is its canonical bundle, Theorem \ref{bn}
asserts that a curve $[C]\in \cM_3$ is non-hyperelliptic if and only the multiplication map
$$\mbox{Sym}^2 H^0(C,K_C)\rightarrow H^0(C, K_C^{\otimes 2})$$ is an isomorphism.
\end{remark}

\end{document}